\newtheoremstyle{dotless}{}{}{\itshape}{}{\bfseries}{}{ }{}
\newtheorem{Theorem}{Theorem}[section]
\newtheorem{Question}[Theorem]{Question} 
\newtheorem{lemma}[Theorem]{Lemma}
\newtheorem*{ConeLemma}{Cone Lemma} 
\newtheorem*{JoinLemma}{Join Lemma} 
\theoremstyle{definition} 
\newtheorem{definition}[Theorem]{Definition} 
\newtheorem{remark}[Theorem]{Remark}
\newtheorem*{Acknowledgements}{Acknowledgements}
\newtheorem*{Example}{Example}
\DeclareMathOperator{\piprod}{\raisebox{-0.1em}{\huge{$\pi$}}\kern -0.2em}
\newcommand{\cac}{\mathcal {C}}
\newcommand{\rr}{{\mathbb R}}
\newcommand{\zz}{{\mathbb Z}}
\newcommand{\vkt}{vk_{\zz/2}}
\newcommand{\gdim}{\operatorname{gd}}
\newcommand{\obdim}{\operatorname{obdim}}
\newcommand{\eqobdim}{\operatorname{eqobdim}}
\newcommand{\actdim}{\operatorname{actdim}}
\newcommand{\updim}{\operatorname{updim}}
\newcommand{\Cone}{\operatorname{Cone}}
\newcommand{\Conf}{\operatorname{Conf}}
\newcommand{\Wu}{\operatorname{Wu}}
\newcommand{\Supp}{\operatorname{Supp}}
\newcommand{\CWu}{\operatorname{CWu}}
\newcommand{\vk}{\operatorname{vk}}
\newcommand{\vktwo}{\vk_{\zz/2}}
\def\clap#1{\hbox to 0pt{\hss#1\hss}}
\newcommand{\comment}[1]{} 
\newcommand{\ga}{\alpha}
\newcommand{\geps}{\varepsilon}
\newcommand{\gs}{\sigma} 
\newcommand{\gt}{\tau}
\newcommand{\gD}{\Delta}
\newcommand{\edge}{\operatorname{Edge}}
\newcommand{\Hom}{\operatorname{Hom}} 
\newcommand{\Homeo}{\operatorname{Homeo}}
\newenvironment{enumeratei'}{ 
\begin{enumerate}[\upshape (i)$'$]}
	{ 
\end{enumerate}
} 
\newenvironment{enumerate1'}{ 
\begin{enumerate}[\upshape (1)$'$]}
	{ 
\end{enumerate}
} 
\newenvironment{enumeratea'}{ 
\begin{enumerate}[\upshape (a)$'$]}{ 
\end{enumerate}
}
  \definecolor{colore}{cmyk}{0,1,0.6,0}
  \definecolor{coloregen}{cmyk}{0.7,0,1,0}
  \definecolor{coloresimo}{cmyk}{1,0.6,0,0}
  \definecolor{colore}{cmyk}{0,0,0,1}
  \definecolor{coloregen}{cmyk}{0,0,0,1}
  \definecolor{coloresimo}{cmyk}{0,0,0,1}
\numberwithin{equation}{section} 
\title{Properly discontinuous actions versus uniform embeddings}
\author{Kevin Schreve}
\begin{document}
\maketitle

\begin{abstract} 
%\smallskip

\noindent Whenever a finitely generated group $G$ acts properly discontinuously by isometries on a metric space $X$, there is an induced uniform embedding (a Lipschitz and uniformly proper map) $\rho: G \rightarrow X$ given by mapping $G$ to an orbit. 
We study when there is a difference between a finitely generated group $G$ acting properly on a contractible $n$-manifold and uniformly embedding into a contractible $n$-manifold. For example, Kapovich and Kleiner showed that there are torsion-free hyperbolic groups that uniformly embed into a contractible $3$-manifold but only virtually act on a contractible $3$-manifold. We show that $k$-fold products of these examples do not act on a contractible $3k$-manifold.

\medskip 

\noindent
\textbf{AMS classification numbers}. Primary: 20F36, 20F55, 20F65, 57S30, 57Q35, 
Secondary: 20J06, 32S22
\smallskip

\noindent
\textbf{Keywords}: van Kampen obstruction, Wu invariant, uniformly proper dimension, action dimension \end{abstract}

\section{Introduction}\label{s:introduction}

For a finitely generated group $G$, the \emph{action dimension} of $G$, denoted $\actdim(G)$, is the minimal dimension of contractible manifold $M$ that admits a properly discontinuous $G$-action. If $G$ is torsion-free, then the quotient $M/G$ is a manifold model for the classifying space $BG$, so the action dimension is precisely the minimal dimension of such a model. The \emph{geometric dimension} is the minimal dimension of a CW-model for $BG$. 

Given a properly discontinuous action of $G$ on $M$, and given any choice of basepoint $m_0$, there is an orbit map $\rho:G \rightarrow M$ defined by $g \rightarrow g.m_0$. After choosing a proper $G$-invariant metric on $M$, this map is Lipschitz and uniformly proper; we call such a map   a \emph{uniform embedding}.  Furthermore, the manifold $M$ is uniformly contractible around the image of $G$ (see Section $2$ for precise definitions).
The \emph{uniformly proper dimension} of $G$, denoted $\updim(G)$, is the minimal dimension of contractible manifold $M$, equipped with a proper metric, so that there is a uniform embedding $\rho: G \rightarrow M$ so that $M$ is uniformly contractible around the image of $G$. The orbit map of a properly discontinuous action shows that $$\updim(G) \le \actdim(G)$$ 

We now review some of the known relations between $\updim(G)$ and $\actdim(G)$. Somewhat surprisingly,  the two dimensions coincide for most of the examples where they have been computed. Such groups include lattices in Lie groups \cite{bf}, mapping class groups \cite{despotovic}, many Artin groups \cite{ados} \cite{dh}, and torsion-free lattices in Euclidean buildings \cite{ks}. These results all come from computing a lower bound to $\updim(G)$, called the \emph{obstructor dimension}, which was defined by Bestvina, Kapovich, and Kleiner \cite{bkk}. We will come to this later in the introduction. 

There are also examples where $\updim(G)$ is strictly less than $\actdim(G)$, but equal to $\actdim(H)$ for $H$ a finite index subgroup of $G$. These examples are relatively easy to construct when $G$ is allowed to have torsion, for example there are many virtually free groups which do not act properly on the plane (such as the free product of alternating groups $A_5 \ast A_5$). Torsion-free examples were constructed by Kapovich-Kleiner in \cite{kk} and Hruska-Stark-Tran in \cite{hst}. In both cases, the groups constructed were virtually $3$-manifold groups but not $3$-manifold groups. The constructions have a similar flavor, roughly one glues surfaces together along simple closed curves using degree $k$ covering maps for $k > 1$. 
In both cases, the obstruction to properly acting on a contractible $3$-manifold comes from analyzing the action of the group on collections of codimension-one hypersurfaces in the universal cover $EG$ of $BG$, and applying the coarse Jordan separation theorem of \cite{kk}. 

There are fewer known examples where $$\updim(G) <  \min_{[G:H] < \infty}\actdim(H).$$ In fact, the only common examples we know are the Baumslag-Solitar groups $$BS(m,n) = \langle x,y | xy^mx^{-1} = y^n\rangle$$ for $m \ne n$. These uniformly embed into a uniformly contractible $3$-manifold (which is a thickening of the Cayley 2-complex), but for a variety of reasons are not $3$-manifold groups if $m \ne n$ (and this is true for finite index subgroups as well). A theorem of Stallings \cite{stallings} implies that for groups with a finite $BG$, $\actdim(G)$ is bounded above by twice the geometric dimension of $G$. In particular, $\actdim= 4$ for the two previous examples (there are also obvious 4-dimensional manifold models of $BG$). 

Kapovich-Kleiner have higher-dimensional results in this direction; for example they show that the group $BS(m,n) \times \zz^k$ does not act properly on a uniformly contractible $(3+k)$-manifold. 
Note that we have the obvious inequalities $$\updim(\Gamma_1 \times \Gamma_2) \le \updim(\Gamma_1) + \updim(\Gamma_2)$$ and $$\actdim(\Gamma_1 \times \Gamma_2) \le \actdim(\Gamma_1) + \actdim(\Gamma_2).$$
It is still open if this last inequality is strict for products of the above examples.

\begin{Question}
Let $G$ be the $k$-fold direct product of the examples in \cite{kk} or \cite{hst}.  What is $\actdim(G)$? Same question for products of Baumslag-Solitar groups with $m \ne n$. 
\end{Question}

It follows from \cite{bkk} that the uniformly proper dimension of these is $ = 3k$, hence $3k \le \actdim(G) \le 4k$. The difficulty here is that all of the above computations rely on studying the action of $G$ on codimension-one hypersurfaces inside $EG$, and showing that this action in incompatible with a group acting on hypersurfaces in a contractible $3$-manifold.  After crossing with $\zz^n$ (or $\pi_1$ of any closed aspherical manifold), there are still codimension-one hypersurfaces in $EG$. However, the $k$-fold product of these examples now has codimension-$k$ hypersurfaces inside $EG$, and the same analysis doesn't apply. For such products, we have the following theorem, which handles some cases where the planes have larger codimension. 

\begin{Theorem}\label{thm:main}
Let $G$ be the $k$-fold direct product of the examples in \cite{kk} or \cite{hst}, where the degree of the covering map is a multiple of $4$. 
Then $$\actdim(G) \ge 3k+1.$$ \end{Theorem}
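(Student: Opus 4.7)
The plan is to use a coarse, equivariant van Kampen / Wu invariant as a lower bound for $\actdim(G)$, refining the obstructor dimension of Bestvina--Kapovich--Kleiner so as to detect obstructions in codimensions greater than one. The keywords (van Kampen obstruction, Wu invariant) and the notation $\CLk$, $\vktwo$ already set up in the preamble suggest this is the right language.

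First, I would recast the Kapovich--Kleiner / Hruska--Stark--Tran obstruction for a single factor $\Gamma_i$ in the language of $\CLk$: exhibit a cocompact $\Gamma_i$-invariant subcomplex $L_i \subset E\Gamma_i$, built from a lift of the gluing $2$-complex together with the peripheral system of cosets of the edge groups, whose coarse equivariant van Kampen class $\CLk(L_i)$ is non-zero, with mod-$2$ reduction detecting the parity of the covering degree $d$ used in the construction. This rewrites the coarse Jordan separation argument for $\actdim(\Gamma_i) \ge 4$ as a non-vanishing $\vktwo$-statement, so that the existing obstructions can be multiplied.

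Second, I would establish a join/product formula for $\CLk$ under direct products of groups: for $G = \Gamma_1 \times \cdots \times \Gamma_k$ and for the subcomplex $L := L_1 \ast \cdots \ast L_k \subset EG$, the class $\CLk(L)$ is an equivariant cross-product of the factor classes on the appropriate configuration spaces. This is where the divisibility hypothesis enters decisively. Each factor class has a natural lift to integer (or $\zz/2^r$) equivariant cohomology; when $d$ is only assumed even, the factor class is $2$-divisible but not $4$-divisible, and a standard Bockstein calculation shows the $k$-fold cross product then vanishes modulo $2$. Requiring $4 \mid d$ supplies a $\zz/4$-lift of each factor class, which makes the $k$-fold cross product non-zero modulo $2$.

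Finally, suppose for contradiction that $G$ acts properly discontinuously on a contractible $3k$-manifold $M$. By the general properties of $\CLk$, the orbit map together with uniform contractibility of $M$ around the image of $G$ yields a proper $G$-equivariant coarse map $L \to M$, and such a map forces $\CLk(L) = 0$; this contradicts Step 2 and gives $\actdim(G) \ge 3k+1$. The hard step will be Step 2: writing down a join/product formula precise enough to track the integer, $\zz/4$, and $\zz/2$ coefficient arithmetic on $\zz/2$-equivariant configuration spaces of joins, and verifying that divisibility by $4$ is the sharp hypothesis. Step 1 is a repackaging of the existing Kapovich--Kleiner / HST arguments, and Step 3 follows formally from the general properties of the coarse van Kampen invariant.
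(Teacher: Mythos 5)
Your overall strategy (coarsify a cohomological embedding obstruction, prove a join/product formula, pull back along the orbit map) is the right shape, but two of your three steps contain genuine gaps. The decisive one is Step 3. Any invariant of the complex $L$ alone that is forced to vanish by the existence of a proper coarse map $L \to M^{3k}$ would equally well obstruct a uniform embedding of $G$ into a uniformly contractible $3k$-manifold --- and these groups \emph{do} uniformly embed there ($\updim(G)=3k$ by \cite{bkk}), so no such invariant can exist. The obstruction must be an invariant of the \emph{map}, not of the complex, and the contradiction must come from the group action. This is exactly why the paper replaces the van Kampen obstruction by the Wu invariant: $\Wu_f$ is an ambient-isotopy invariant of an embedding into $\rr^{n+1}$, living in $H^n(\cac(K);\zz^{-\geps})$, and if $f$ is equivariant for a finite symmetry $h$ realized by an orientation-preserving homeomorphism of $\rr^{n+1}$ (hence isotopic to the identity by \cite{kirby}), then $\Wu_f=\Wu_{f\circ h}=h^*\Wu_f$. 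The contradiction is then obtained by pairing against an ``evaluation cycle'' $\Phi$ with $\sum_{h\in A}h_*\Phi=0$ in twisted integral coefficients but $\langle\Wu_f,\Phi\rangle\ne 0$; the relevant symmetry group here is a \emph{finite} cyclic group permuting the branches at a lift of the gluing curve, realized only coarsely inside $EG$ (an ``$H$-preserving'' condition), not the $G$-action on a cocompact subcomplex as in your Step 1.

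Your account of where $4\mid d$ enters is also not viable. A $2$-divisible integral class has zero mod-$2$ reduction, so your Bockstein mechanism would already kill the single-factor case; and the single-factor obstruction of \cite{kk},\cite{hst} holds for essentially all $d\ge 2$, so the hypothesis cannot be about divisibility of the factor classes. The actual reason is this: one can only guarantee that \emph{squares} of group elements act by orientation-preserving homeomorphisms, so the cancelling symmetry must lie in the subgroup generated by squares. With $\zz/4=\langle h\rangle$ cyclically permuting four branches and fixing one, the element $h^2$ is a square, it swaps the two moving points of the evaluation cycle $\Phi=(A,1)+(1,3)+(3,A)$, and in the coefficients $\zz^{-}$ this swap introduces a sign, giving $h^2_*\Phi=-\Phi$ and hence $\Phi+h^2_*\Phi=0$. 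A $\zz/2$-symmetry would not do, since its only square is trivial. Your Step 2 (a join formula) does correspond to a real ingredient of the proof --- the paper's Join Lemma with careful sign bookkeeping in $\zz^{\pm}$ coefficients yields $\eqobdim(G_1\times G_2)\ge\eqobdim(G_1)+\eqobdim(G_2)-1$ --- but without the Wu-invariant/finite-symmetry mechanism above, the classes you propose to multiply do not obstruct anything beyond $\updim$.
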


Similar results hold for some virtually free groups, see Section 6 for the precise statements. The conditions on the degree of the cover are an unfortunate fault of our method. Of course, these groups have finite index subgroups which have $\actdim(G) = 3k$.

Before describing the methods that go into the proof of Theorem \ref{thm:main}, let us recall the \emph{obstructor dimension} $\obdim(G)$ of a group $G$ as defined in \cite{bkk}. This is based on the  \emph{$\zz/2$-valued van Kampen obstruction} to embedding finite subcomplexes into $\rr^n$, which is an $n$-dimensional class, denoted $\vktwo^n$, in the cohomology of the unordered $2$-point configuration space $\cac(K)$ with $\zz/2$-coefficients, see subsection \ref{ss:vk} for details. 
A finite complex $K$ is an \emph{$n$-obstructor} if $\vktwo^n(K) \ne 0$; in particular this implies that $K$ does not embed into $\rr^n$. 

Roughly speaking, the \emph{obstructor dimension} of a finitely generated group $G$ is the maximal $n+2$ so that there is an $n$-obstructor $K$ and a uniformly proper embedding $f:K \times \rr^+ \rightarrow EG$. Bestvina, Kapovich and Kleiner show that $$\obdim(G) \le \updim(G).$$ 

The moral we follow is that if obstructor complexes give lower bounds for $\updim(G)$, then simplicial complexes with a group action which do not equivariantly embed into $\rr^n$ should give lower bounds for $\actdim(G)$. Again, we require a cohomological obstruction to equivariantly embedding the complex. In this case, we use an ambient isotopy invariant, called the \emph{Wu invariant},  of an embedding of $K$ into $\rr^{n+1}$. This invariant also lives in the $n^{th}$ degree cohomology of $\cac(K)$, though with twisted integral coefficients, and its image in $H^n(\Conf(K); \zz/2)$ upon reducing the coefficients to $\zz/2$ is precisely the $\zz/2$-valued van Kampen obstruction. 

If $K$ is a graph, then the Wu invariant has been often used to study embeddings of $K$ into $\rr^3$, see for example \cite{ffn} \cite{taniyama}. Flapan in \cite{f} also used the linking number of images of subgraphs to obstruct certain equivariant embeddings of the complete graph $K_n$ into $\rr^3$. 
If a finite group $H$ acts on $K$, and an embedding $f:K \rightarrow \rr^n$ is equivariant with respect to some representation $\rho: H \rightarrow \Homeo^+(\rr^n)$, then the $\Wu$ invariant of $f$ is fixed under the $H$-action on $H^n(\Conf(K))$ (since all orientation-preserving homeomorphisms of $\rr^n$ are isotopic to the identity). We roughly define an \emph{equivariant obstructor} to be a finite $H$-complex $K$ which is an $n$-obstructor and which does not admit an invariant $\Wu$ class. 

We then roughly define the \emph{equivariant obstructor dimension}, $\eqobdim(G)$,  of a finitely generated group $G$ to be the maximal $n+2$ so that there is an equivariant $n$-obstructor $K$ and a uniformly proper embedding $f:K \times \rr^+ \rightarrow EG$ which is coarsely $H$-equivariant, see Section \ref{s:coarsewu}.
It will follow from the definitions that $\obdim(G) \le \eqobdim(G) \le \obdim(G) + 1$. We will show that $$\eqobdim(G) \le \actdim(G)$$ and $$\eqobdim(G_1 \times G_2) \ge \eqobdim(G_1) + \eqobdim(G_2) - 1$$ which will imply Theorem \ref{thm:main}. 

This paper is structured as follows. If Section \ref{s:background}, we review some necessary background information. In Section \ref{s:equivariant} we define an equivariant obstructor complex and show that such a complex does not equivariantly embed into $\rr^n$. In Sections 4 and 5 we develop the coarse analogue of this. In Section \ref{s:examples}, we apply this to compute the action dimension of a number of examples. 

\begin{Acknowledgements}
I would like to thank Boris Okun and Shmuel Weinberger for helpful conversations. The author is partially supported by NSF grant DMS-1045119.
\end{Acknowledgements}

\section{Background}\label{s:background}

\subsection{Uniformly proper dimension and coarse topology}\label{ss:upd}

Recall that a metric space is \emph{proper} if closed metric balls are compact, and that a map between two spaces is proper if preimages of compact sets are compact. Two maps $f_0, f_1: X \rightarrow Y$ are \emph{properly homotopic} if there is a proper map $F: X \times [0,1] \rightarrow Y$ so that $F|_{X \times 0} = f_0$ and $F|_{X \times 1} = f_1$. 

 Let $X$ and $Y$ be two proper metric spaces. 
A proper map $f: X \rightarrow Y$ is \emph{uniformly proper} if there exists a proper function $\phi: \rr^+ \rightarrow \rr^+$ so that $$d_Y(f(x_1), f(x_2)) \ge \phi(d_X(x_1, x_2))$$ for all $x_1, x_2 \in X$, where $\rr^+ = [0,\infty)$. If $f$ is also Lipschitz, this is sometimes referred to in the literature as a \emph{coarse embedding}, though we will call these \emph{uniform embeddings}. If $\phi$ is a linear function, then $f$ is a \emph{quasi-isometric embedding}.  
If $G$ is a finitely generated group and $H$ is a finitely generated subgroup, the inclusion of $H$ into $G$ is a uniform embedding with respect to the word metrics on $G$ and $H$ (and this map will not be a quasi-isometric embedding if $H$ is distorted in $G$). 

 Recall that a space $X$ is \emph{uniformly contractible} if there is a function $\phi: \rr^+ \rightarrow \rr^+$ so that for every $x \in X$ the ball $B(x,R)$ contracts inside $B(x,\phi(R))$. Given a subspace $Y \subset X$, we say $X$ is \emph{uniformly contractible around $Y$} if the above holds for all points $y \in Y$. Of course, if $X$ is contractible and admits a proper, cocompact group action, then it is uniformly contractible. 

\begin{definition}
Given a finitely generated group $G$, the \emph{uniformly proper dimension} of $G$ is the minimal $n$ so that there is a contractible $n$-manifold $M^n$, equipped with a proper metric, and a uniform embedding $\rho: G \rightarrow M^n$ so that $M^n$ is uniformly contractible around $\rho(G)$. 
\end{definition}

 The uniformly contractibility assumption is essential, as Bestvina, Kapovich, and Kleiner noted that any finitely generated group has a uniform embedding into $\rr$ for some proper metric on $\rr$.
Note also that $\updim(G)$ is a quasi-isometry invariant of $G$, whereas $\actdim(G)$ is not. We record the following well-known lemma. 

\begin{lemma}
Suppose that a finitely generated group $G$ acts properly discontinuously by isometries on a proper metric space $X$. Then each orbit map $\rho: G \rightarrow X$, which takes $g$ to $gx_0$ for a choice of $x_0 \in X$, is a uniform embedding. If $X$ is contractible, then $X$ is uniformly contractible around $\rho(G)$. In particular, $\updim(G) \le \actdim(G)$. 
\end{lemma}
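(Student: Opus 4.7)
The plan is to verify each of the three conclusions directly from the assumptions of properness and isometric action; the inequality $\updim(G) \le \actdim(G)$ then follows immediately from the definitions.

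First I would check that $\rho$ is Lipschitz. Fix a finite symmetric generating set $S$ for $G$ and set $L = \max_{s \in S} d_X(s x_0, x_0)$. For $g,h \in G$, writing $h^{-1}g$ as a word of length $n = d_G(g,h)$ in $S$ and using that $G$ acts by isometries, repeated triangle inequalities give $d_X(g x_0, h x_0) = d_X(h^{-1}g\, x_0, x_0) \le L \cdot n$. For uniform properness, define $\phi : \rr^+ \to \rr^+$ by
\[
\phi(r) = \inf \{ d_X(g x_0, x_0) : g \in G, \; d_G(e,g) \ge r \}.
\]
This is non-decreasing, and if it failed to tend to $\infty$ there would be some $R > 0$ and a sequence $g_n$ with $d_G(e, g_n) \to \infty$ and $g_n x_0 \in \overline{B(x_0,R)}$; since $X$ is proper this closed ball is compact, and proper discontinuity of the action forces $\{g \in G : g x_0 \in \overline{B(x_0,R)}\}$ to be finite, a contradiction. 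Applying the bound with $h^{-1}g$ in place of $g$ yields $d_X(g x_0, h x_0) \ge \phi(d_G(g,h))$.

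The uniform contractibility claim uses that $G$ acts by isometries, so it suffices to establish a contractibility bound at the single basepoint $x_0$ and then translate it by $G$. Let $H : X \times I \to X$ be any contraction of $X$ to a point $p$. For $R > 0$ the image $H(\overline{B(x_0,R)} \times I)$ is compact (using again that $X$ is proper), hence contained in $B(x_0, \psi(R))$ for some finite $\psi(R)$; this gives a null-homotopy of $B(x_0,R)$ inside $B(x_0, \psi(R))$. For any $g \in G$, conjugating this homotopy by $g$ produces a null-homotopy of $B(g x_0, R) = g \cdot B(x_0,R)$ inside $g \cdot B(x_0, \psi(R)) = B(g x_0, \psi(R))$, since $g$ is an isometry. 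Thus the same $\psi$ witnesses uniform contractibility around every point of $\rho(G)$, and assembling the three parts yields $\updim(G) \le \actdim(G)$. The only subtlety worth flagging is that the action is not assumed to be cocompact, so the single-basepoint contractibility estimate cannot be imported from compactness of a quotient; it must instead be spread over the whole orbit using the isometric equivariance of $\rho$.
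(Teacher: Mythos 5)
Your proof is correct and follows essentially the same route as the paper: the same word-length estimate for the Lipschitz bound, the same infimum-type function $\phi$ (verified proper via proper discontinuity) for uniform properness, and the translation-by-isometries argument for uniform contractibility, which the paper compresses into the single remark that $G$ acts cocompactly on $\rho(G)$. Your explicit treatment of the contractibility function $\psi$ via compactness of $H(\overline{B(x_0,R)}\times I)$ is a correct filling-in of that one-line step.
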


\begin{proof}

Since the action is properly discontinuous, the orbit map is proper. Let $g_1, \dots g_n$ be a generating set for $G$, and let $N$ be the maximum of the values $d_X(x_0, g_ix_0)$ for $i \in \{1, \dots, n\}$. Then if $g$ is a group element with $d(1,g) = m$, we have that $d_X(x_0, gx_0) \le mN$. 
Therefore, since $G$ acts by isometries, we have that $d_X(g_1x_0, g_2x_0) = d_X(g_2^{-1}g_1x_0, x_0) \le d_G(g_1, g_2)N$, so $\rho$ is $N$-Lipschitz. To prove uniform properness, let $$\phi(n) = \min_{{\substack{g \in G \\ d_G(1,g) =n}}} d(x_0, gx_0).$$ Then $\phi$ is proper since the action is properly discontinuous, and obviously $$d_X(f(1), f(g)) \ge \phi(d_G(1, g)).$$ Since $G$ acts by isometries, $d_X(f(g_1), f(g_2)) \ge \phi(d_G(g_1, g_2))$ for all $g_1, g_2 \in G$. The uniformly contractible statement is immediate as  $G$ acts cocompactly on $\rho(G)$. 
\end{proof}

Given a finitely generated group $G$, we will denote by $EG$ any contractible complex $X$ that admits a proper and cocompact cellular action by $G$. Such an $EG$ may not exist, but for the rest of the paper we will only work with groups that act on such spaces. If $G$ is torsion-free, then this is the same as the universal cover of a finite classifying space $BG$. If $G$ contains torsion, it is usually assumed that the fixed point sets of these torsion elements in $EG$ are contractible; however we do not need this assumption. We will always assume that $EG$ is equipped with a proper $G$-invariant metric.
We will need the following lemma. 

\begin{lemma}
Let $G$ be a finitely generated group, and suppose that $G$ acts properly on a contractible metric space $Y$ by isometries. Then the orbit map $\rho: G \rightarrow Y$ extends to a uniformly proper Lipschitz map $\rho: EG \rightarrow Y$ so that $d_Y(g\rho(x), \rho(gx)) < C$ for all $x \in EG$ and some constant $C >0$ (such a $\rho$ is called quasiequivariant \cite{drutukapovich}). 
\end{lemma}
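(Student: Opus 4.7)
The plan is to extend the orbit map skeleton-by-skeleton over the cell structure on $EG$, using the cocompactness of the $G$-action to bound all the extensions uniformly and the contractibility of $Y$ to carry them out. First, fix a basepoint $y_0 \in Y$ and a vertex $v_0 \in EG^{(0)}$, so that the orbit map is identified with $g \mapsto g y_0$. On the full $0$-skeleton of $EG$, for each $G$-orbit of vertices choose a representative $v_i$, fix a system of coset representatives for $G/G_{v_i}$ (where $G_{v_i}$ denotes the stabilizer, finite by properness of the action on $EG$), and set $\rho(g v_i) := g y_0$ for $g$ ranging over the chosen representatives. Since $G_{v_i}$ is finite and acts by isometries on $Y$, the $G_{v_i}$-orbit $G_{v_i} y_0$ has bounded diameter, so the choice of coset representative only introduces bounded ambiguity.

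Next, proceed inductively on cells. Suppose $\rho$ has been defined on the $n$-skeleton, with image diameter on each cell at most $D_n$ and quasi-equivariance constant at most $C_n$. There are finitely many $G$-orbits of $(n+1)$-cells by cocompactness; choose a representative $\sigma_\alpha$ for each. The restriction $\rho|_{\partial \sigma_\alpha}$ is a continuous map of a sphere into the contractible space $Y$ and so extends to a continuous $\rho|_{\sigma_\alpha}$, which by compactness of $\partial \sigma_\alpha$ and finiteness of the number of representatives can be chosen with uniformly bounded image diameter $D_{n+1}$. For a general $(n+1)$-cell $g \sigma_\alpha$, set $\rho|_{g\sigma_\alpha} := g \cdot \rho|_{\sigma_\alpha}$ using the chosen coset representative. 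This may disagree with the already-defined $\rho|_{\partial(g\sigma_\alpha)}$ by at most $C_n + D_n$, a bounded discrepancy that can be absorbed into a slightly larger extension via contractibility of $Y$. Since $EG$ is finite-dimensional, the constants $D_n, C_n$ are uniform in the end.

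It remains to verify the three properties. Lipschitzness is immediate: cells have bounded diameter in $EG$ (in any proper simplicial $G$-invariant metric) and bounded image diameter in $Y$, and the two estimates together yield a uniform Lipschitz constant on the union of cells. Uniform properness reduces to uniform properness of the orbit map $g \mapsto g y_0$, and for any $N$ the set $\{g \in G : d_Y(y_0, g y_0) \le N\}$ is finite by properness of the action on $Y$, hence is contained in some finite word-ball $B_G(R(N))$; this gives a proper lower-control function $\phi$, which transfers to $EG$ after accounting for the bounded cell diameters. Quasi-equivariance is built into the construction, with constant depending only on the (finite) stabilizer diameters $\operatorname{diam}(G_\sigma y_0)$ over the finitely many orbit representatives $\sigma$ and on the boundary corrections $C_n, D_n$.

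The main obstacle is the finite-stabilizer phenomenon. One cannot in general arrange strict $G$-equivariance, because for a finite subgroup $H \le G$ the fixed-point set $Y^H$ may be empty or non-contractible, so an $H$-equivariant extension over a cell fixed by $H$ need not exist. However, quasi-equivariance is all that is required, and this survives because each finite orbit $H y_0 \subset Y$ has bounded diameter; the bounded discrepancies introduced at each inductive step accumulate only up to the fixed dimension of $EG$, and so all quasi-equivariance constants remain uniform.
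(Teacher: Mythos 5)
Your proposal is correct and follows essentially the same route as the paper: define the map on vertices via the orbit, extend inductively over the (finitely many $G$-orbits of) cells with uniformly bounded image diameters using contractibility, and deduce Lipschitzness, uniform properness (from uniform properness of the orbit map, which the paper's preceding lemma supplies), and quasiequivariance from equivariance-up-to-bounded-error on the $0$-skeleton. Your extra care with finite vertex stabilizers and the bounded boundary discrepancy when translating cell extensions is a legitimate elaboration of details the paper's shorter proof leaves implicit.
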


\begin{proof}
 Choose a basepoint $x_0 \in EG$, and identify the orbit of $x_0$ in $EG$ with the image of the orbit map in $Y$ $G$-equivariantly. Now, since $Y$ is uniformly contractible around the image of $\rho$, we can extend the orbit map to the simplices of $EG$ so that the diameter of the image of each simplex is uniformly bounded. Since the orbit map was a uniformly embedding from $G$, this implies the extension is a uniform embedding. Since the map was $G$-equivariant on the vertices, this implies quasiequivariance for the extension. 
\end{proof}

\subsection{$\zz/2$-valued van Kampen obstruction}\label{ss:vk}
Let $\widetilde \cac(K)$ denote the simplicial configuration space of ordered pairs of distinct simplices in $K$, i.e., if $\gD$ denotes the simplicial diagonal $\gD = \{(\sigma, \tau) | \sigma \cap \tau \ne \emptyset\}$ then $$\widetilde \cac(K) = (K\times K) -\gD.$$ There is an involution $\iota$ on $\widetilde \cac(K)$ which switches the factors, let $\cac(K)$ denote the quotient. 
The double cover $\iota: \widetilde \cac(K) \to \cac (K)$ is classified by a map $c:\cac(K)\to \rr P^\infty$.
The \emph{$\zz/2$-valued van Kampen obstruction} in degree $m$ is the cohomology class $\vktwo^m(K)\in H^m(\cac(K);\zz/2)$ defined by
\[
	\vktwo^m(K) = c^*(w_1^m),
\]
where $w_1$ is the generator of $H^1(\rr P^\infty;\zz/2)$. 
If $K$ embeds into $\rr^m$, then a classifying map factors through $\rr P^{m-1}$, and hence $\vktwo^m(K) = 0$. Therefore, $\vktwo^m(K)$ is an obstruction to embedding $K$ in $\rr^m$.

Note that $\vktwo^m(K) \ne 0$ if and only if there is a cycle $\Phi \in H_m(\cac(K); \zz/2)$ so that the evaluation $\langle \vktwo^m(K), \Phi \rangle \ne 0$. 
Bestvina, Kapovich and Kleiner define a $m$-obstructor as a slight strengthening of this. 

\begin{definition}
A finite complex $K$ is an \emph{$m$-obstructor} if there is a cycle $\Phi \in H_m(\cac(K); \zz/2)$ satisfying \begin{itemize}
\item $\langle \vktwo^m(K), \Phi \rangle \ne 0$
\item If $v$ is a vertex, then the collection $\{\sigma, v\} \in \Phi$ has even cardinality.

\end{itemize}
\end{definition}

\begin{Example}
The following are examples of obstructor complexes \cite{bkk}.

\begin{itemize}
\item The disjoint union of an $m$-sphere (say triangulated as $\ast_m S^0$) and a point is an $m$-obstructor.
\item The cone on an $m$-obstructor is an $(m+1)$-obstructor.
\item The join of an $m_1$-obstructor and an $m_2$-obstructor is an $(m_1 + m_2+2)$-obstructor. 
\end{itemize}
\end{Example}

\begin{definition}
Let $\Cone_\infty(K) = K \times \rr^+/ K \times 0$. 
A proper map of $\Cone_\infty(K)$ into a metric space is \emph{expanding} if for any pair of disjoint simplices $\sigma, \tau$ in $K$, the  distance between $\sigma \times [t,\infty)$ and $\tau \times [t,\infty)$ goes to infinity as $t \rightarrow \infty$. 
\end{definition}

\begin{definition}
The \emph{obstructor dimension} of $G$, denoted $\obdim(G)$, is the maximal $n+2$ so that there is a proper expanding map $f: \Cone_\infty(K) \rightarrow EG$ where $K$ is an $n$-obstructor. 
\end{definition} 

\begin{remark}
Bestvina, Kapovich and Kleiner give a more general definition of obstructor dimension which does not involve the space $EG$ (and in particular works for all finitely generated groups). For the groups we are interested in, the two notions coincide. 
\end{remark}

The main theorem of \cite{bkk} is that $\obdim(G) \le \updim(G)$. 
It also follows from a Join Lemma for the van Kampen obstruction that $$\obdim(G_1 \times G_2) = \obdim(G_1) + \obdim(G_2).$$

\subsection{Integral van Kampen and Wu Invariants}

The trivial and nontrivial $\zz/2$-module structures on $\zz$ will be denoted by $\zz$ and $\zz^-$, respectively. Recall that if $\widetilde X$ is a complex with free $\zz/2$-action and $X$ is the quotient, then for any $\zz[\zz/2]$-module $M$, the groups $H_\ast(X, M)$ are the homology groups of the chain complex $C_\ast(\widetilde X) \otimes_{\zz/2} M$. Similarly, the groups $H^\ast(X, M)$ are the cohomology groups of the complex $\Hom_{\zz/2} (C_\ast(\widetilde X), M)$. If $M = \zz$, then these complexes can be identified with $C_\ast(X, \zz)$ and $C^\ast(X, \zz)$ respectively. If $M = \zz^-$, then $C_\ast(X, \zz^-)$ can be identified with the quotient complex $$C_\ast(\widetilde X, \zz)/(c \sim -\iota_\ast c)$$ and  $C^\ast(X, \zz^-)$ with the subcomplex $$\{f \in C^\ast(\widetilde X,\zz)| f(c) \sim -f(\iota_\ast c)\}$$

In our setting, note that the $\zz/2$-action $\iota_\ast$ on $C_\ast(\widetilde \cac(K), \zz)$ sends the chain $(\gs, \gt)$ to the chain $(-1)^{\dim \gs \dim \gt} (\gt, \gs)$.

We have 
$$
	H^i(\rr P^\infty;\zz)=
	\begin{cases}
		\zz, &\text{if $i=0$;}\\
		\zz/2, &\text{if $i>0$ and is even;}\\
		0, &\text{if $i$ is odd.}
	\end{cases}
$$
The sequence $0 \rightarrow \zz \rightarrow \zz[\zz/2] \rightarrow \zz^- \rightarrow 0$ induces a long exact sequence in cohomology. Since $H^*(\rr P^\infty;\zz[\zz/2]) = H^*(S^\infty, \zz) = 0$ for $\ast > 0$, we have
$$
	H^i(\rr P^\infty;\zz^-)=
	\begin{cases}
		\zz/2, &\text{if $i$ is odd;}\\
		0, &\text{if $i$ is even.}
	\end{cases}
$$
Let $e_1$ denote the nontrivial element of $H^1(\rr P^\infty; \zz^-) \cong \zz/2$. For the rest of the paper we will let $\geps$ denote the sign of $(-1)^n$, so that $e_1^n$ is the nontrivial element of $H^n(\rr P^\infty; \zz^\geps)\cong \zz/2$.
For $K$ a finite complex, the \emph{integral degree $n$ van Kampen obstruction}, denoted $\vk^n(K)$, is  given by
$$
	\vk^n(K) = c^*(e_1^n) \in H^n(\cac(K);\zz^\geps)
$$

Now, if $f$ is an embedding of $K$ into $\rr^{n+1}$, then $f$ determines a $\zz/2$-equivariant \emph{Gauss map} $\widetilde F$ from $\widetilde \cac(K)$ to $\mathbb{S}^n$; $$\widetilde F((x,y)) = \frac{f(x) - f(y)}{||f(x)-f(y)||}.$$ There is an induced map $F:\cac(K) \rightarrow \rr P^n$. Again by the coefficient long exact sequence, $$H^n(\rr P^n; \zz^{-\geps}) \cong \zz.$$ Let $\eta$ denote the generator of $H^n(\rr P^n; \zz^{-\geps})$. The \emph{Wu invariant} of $f$, denoted $\Wu_f^n(K)$, is the pullback \[\Wu_f^n(K) = F^\ast(\eta) \in H^n(\cac(K);\zz^{-\geps}).\] 

If $f$ and $g$ are two embeddings with $\Wu_f^n(K) \ne \Wu_g^n(K)$, then $f$ and $g$ are not ambient isotopic (since such an isotopy would induce a $\zz/2$-equivariant homotopy between the Gauss maps $\widetilde F$ and $\widetilde G$). 
More interestingly, $\Wu$ is a complete ambient isotopy invariant for embedding of $n$-complexes into $\rr^{2n+1}$ for $n > 1$. It is easy to construct examples of embeddings with $\Wu_f \ne \Wu_g$. For  example, if $K$ is the disjoint union of two circles, then the degree two Wu invariant evaluated on the fundamental class of $\cac(K)$ measures the linking number of an embedding of $K$ into $\rr^3$.

 On the other hand, the class $\vk$ does not depend on the embedding. See Figure 1 for an explanation of how the $\Wu$ invariant evaluates differently on cells than the van Kampen obstruction. Roughly, both invariants admit geometric representatives obtained by taking an embedding into $\rr^{n+1}$, generically projecting to $\rr^n$, and then counting signed intersections between disjoint simplices with $\dim \gs + \dim \gt = n$. The $\Wu$ invariant is more refined as it remembers which simplices are ``higher" from the point of view of the projection. 

There are obvious homomorphisms $\zz^{+/-} \rightarrow \zz/2$. Under these change of coefficients, $e_1 \in H^1(\rr P^\infty, \zz^-)$ maps to $\omega_1$, and $\eta \in H^n(\rr P^n, \zz^{-\geps})$ maps to $\omega_1^n$. Therefore, both the integral van Kampen obstruction and the Wu invariant  reduce to $\vk^n_{\zz/2}$ via change of coefficients. 

There is also a natural evaluation map $$\langle \hspace{.5mm}, \rangle: H^n(\cac(K), \zz^{-\geps}) \times H_n(\cac(K), \zz^{-\geps}) \rightarrow \zz^{-\geps}$$ which comes from the identifications $$C^n(\cac(K), \zz^{-\geps}) = \Hom_{\zz/2}(C_n(\cac(K), \zz), \zz^{-\geps}) \cong \Hom_{\zz/2}(C_n(\cac(K), \zz) \otimes_{\zz/2} \zz^{-\geps}, \zz^{-\geps}).$$
Therefore, a representative for a cohomology class gives a $\zz/2$-homomorphism from $C_n(K, \zz^{-\geps})$ to $\zz^{-\geps}$. 
This passes to a well-defined homomorphism on cohomology and homology.  
We record the following lemma.

\begin{lemma}
Let $\phi \in H^n(\cac(K), \zz^{-\geps})$ and $\psi \in H_n(\cac(K), \zz^{-\geps})$, let $p^{-\geps}$ be the nontrivial homomorphism $\zz^{-\geps} \rightarrow \zz_2$, and let $\phi_{\zz/2}$ and $\psi_{\zz/2}$ denote the images in $H_n(\cac(K), \zz/2)$ and $H_n(\cac(K), \zz/2)$ of $\phi$ and $\psi$ respectively. Then $$p^{-\geps}(\langle \phi, \psi \rangle) = \langle \phi_{\zz/2}, \psi_{\zz/2} \rangle.$$
\end{lemma}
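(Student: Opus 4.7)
The plan is to prove this purely at the chain level and then transfer to (co)homology using naturality of the evaluation pairing under changes of coefficients. First I would pick explicit representatives: a $\zz/2$-equivariant cocycle $\phi \in \Hom_{\zz/2}(C_n(\wt{\cac}(K),\zz),\zz^{-\geps})$ and a cycle $\psi = \sum_i c_i \otimes a_i$ in $C_n(\wt{\cac}(K),\zz)\otimes_{\zz/2}\zz^{-\geps}$. Under the identifications given just before the lemma, the chain-level evaluation is $\langle \phi,\psi\rangle_{ch} = \sum_i \phi(c_i)\, a_i$, an integer that we regard as an element of the underlying abelian group of $\zz^{-\geps}$.

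Next I would observe that the mod-$2$ reduction $p^{-\geps}\colon \zz^{-\geps} \to \zz/2$ is automatically $\zz/2$-equivariant, since the only $\zz/2$-module structure on $\zz/2$ is the trivial one and $p^{-\geps}(-n) = -n \bmod 2 = n \bmod 2 = p^{-\geps}(n)$. Consequently, $p^{-\geps}$ induces the coefficient-change maps $\phi \mapsto \phi_{\zz/2} = p^{-\geps}\circ \phi$ in cohomology and $\psi \mapsto \psi_{\zz/2} = \sum_i c_i \otimes p^{-\geps}(a_i)$ in homology. Because $p^{-\geps}$ is a ring homomorphism with respect to integer multiplication in the source and multiplication in $\zz/2$ in the target, applying it to $\langle \phi,\psi\rangle_{ch}$ yields
\[
p^{-\geps}\Bigl(\sum_i \phi(c_i)\, a_i\Bigr) = \sum_i p^{-\geps}(\phi(c_i))\, p^{-\geps}(a_i) = \langle \phi_{\zz/2},\psi_{\zz/2}\rangle_{ch}.
\]

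Finally, since this chain-level identity is compatible with coboundaries and boundaries (both sides vanish on $\Ima \delta \otimes \ker \partial$ and $\ker \delta \otimes \Ima \partial$ by the usual argument), it descends to the desired identity in (co)homology. The argument is entirely formal; there is no real obstacle, since this is just the naturality of evaluation in (twisted) (co)homology under $\zz/2$-equivariant coefficient homomorphisms. The only minor care needed is to verify that the identification $\zz^{-\geps} \otimes_{\zz/2} \zz^{-\geps} \cong \zz$ used implicitly in the paper is compatible with the stated target $\zz^{-\geps}$ of the pairing and the stated mod-$2$ reduction $p^{-\geps}$, which it is, since both are the identity on underlying abelian groups.
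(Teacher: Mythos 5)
Your argument is correct: the chain-level identity $p^{-\geps}\bigl(\sum_i \phi(c_i)a_i\bigr)=\sum_i p^{-\geps}(\phi(c_i))\,p^{-\geps}(a_i)$, together with the $\zz/2$-equivariance of $p^{-\geps}$ and the usual descent of the evaluation pairing to (co)homology, is exactly the standard naturality argument, and your remark about the identification of the pairing's target with the underlying abelian group $\zz$ addresses the only point that needs any care. The paper records this lemma without proof, treating it as routine, and your proof is the one it implicitly has in mind.
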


%First, fix a set-theoretic section $s: C_n(\cac(K) \rightarrow C_n(\widetilde \cac(K))$. For a cycle $$\Phi = \sum_{(\sigma, \tau)\in \widetilde \cac(K)} (\gs, \gt) \otimes a_{(\sigma, \tau)}$$ in $H_n(\cac(K), \zz^{-\geps})$ and $\phi$ in $H^n(\cac(K), \zz^{-\geps})$, define $$\langle \phi, \Phi \rangle := \sum_{(\sigma, \tau) \in \image(s)} a_{(\sigma, \gt)}\phi(\gs, \gt).$$

%It is easy to see that the conditions on $\Phi$ and $\phi$ ensure that this evaluation does not depend on the choice of $s$.

\begin{figure}
\centering
\begin{tikzpicture}
\node[above] at (-3, 0) {$\gs$};
 \draw[thick, ->] (-2,.1) -- (-2,1);
  \draw[thick] (-2,-1) -- (-2,-.1);
  \node[right] at (-2, -1) {$\gt$};
 \draw[thick, ->] (-3,0) -- (-1,0);
 
 \node[above] at (0, 0) {$\gs'$};
  \draw[thick, ->] (1,-1) -- (1,1);
 \draw[thick] (0,0) -- (.9,0);
  \draw[thick, ->] (1.1,0) -- (2,0);
    \node[right] at (1, -1) {$\gt'$};
    
    \node at (-.2, -2) {$\vk(\gs, \gt) = \vk(\gs', \gt') = -\vk(\gt, \gs)$};
    \node at (-.2, -2.5) {$\Wu(\gs, \gt) = -\Wu(\gs', \gt') = \Wu(\gt, \gs)$};

\end{tikzpicture}
\caption{An example of how the van Kampen obstruction and Wu invariant evaluate on a pair of cells. In this case, these edges are part of an embedded graph in $\rr^3$. In both cases, $\vk$ and $\Wu$ switch sign upon changing the orientation of $\gs$ or $\gt$. In this case,  $\iota(\gs, \gt) = -(\gt,\gs)$, so $\Wu(\gs, \gt) = \Wu(\gt, \gs)$ and $\vk(\gs, \gt) = -\vk(\gt, \gs)$. 
}

\end{figure}

\section{Equivariant obstructors}\label{s:equivariant}

We now use the Wu invariant to obstruct certain equivariant embeddings of finite complexes into Euclidean space.  In the next section, we will develop a coarsened version which obstructs properly discontinuous actions. 

Suppose that $H$ is a finite group and $K$ is an $H$-complex. Suppose that $\rho: H \rightarrow \Homeo(\rr^{n+1})$ is a homomorphism.
An $H$-equivariant embedding of $K$ is an embedding $f: K \rightarrow \rr^{n+1}$ satisfying $$f(hk) = \rho(h)(f(k)) \text{ for all } k \in K, h \in H.$$ 
If $\rho(h)$ is orientation-preserving, it is isotopic to the identity \cite{kirby}, so if $f$ is $H$-equivariant than $\Wu_f = \Wu_{\rho(h) \circ f} = \Wu_{f \circ h}$ for all $h \in H$. We will want to assume $\rho(h)$ preserves orientation, so we consider only the elements of $H$ that are squares. For $H$ a finite group, let $S(H)$ be the subgroup generated by the squares of elements of $H$. 
For a cycle $$\Phi = \sum_{(\gs,\gt) \in \widetilde \cac(K)} (\gs, \gt) \otimes a_{(\sigma, \tau)} \in H_n(\cac(K); \zz^{-\geps}),$$ let $\Phi_{\zz/2}$ denote its image in $H_n(\cac(K); \zz/2)$ under the change of coefficients homomorphism. 

\begin{definition}
Let $K$ be a finite complex. A cycle $\Phi$ in $H_n(\cac(K); \zz^{-\geps})$ is an \emph{evaluation $n$-cycle} if 
\begin{itemize}
\item $\langle \vktwo^n(K), \Phi_{\zz/2}\rangle \ne 0$
\item If $\sigma$ is an $n$-cell, then the sum $$\sum_{\substack{v \in K^{(0)}\\ (\sigma, v) \in \widetilde C(K)}} a_{(\gs,v)} = 0.$$
\end{itemize}
\end{definition}

As in the definition of obstructor, the first condition is the more important one, and the second condition guarantees that certain join formulas will hold. 
Since $\Wu_f^n$ maps to $\vktwo^n$ under change of coefficients, we have the following lemma. 

\begin{lemma}\label{l:nontrivial}
If $f:K \rightarrow \rr^{n+1}$ is an embedding and $\Phi \in H_n(\cac(K); \zz^{-\geps})$ is an evaluation $n$-cycle, then $\Wu_f^n(K)$ evaluates nontrivially on $\Phi$. 
\end{lemma}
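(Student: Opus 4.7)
The plan is a direct application of the preceding coefficient-pairing lemma, once one identifies the $\zz/2$-reduction of $\Wu_f^n(K)$ with $\vktwo^n(K)$. The argument proceeds in two steps.

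First, I would check that the coefficient-reduction homomorphism $p^{-\geps}:\zz^{-\geps}\to\zz/2$ carries $\Wu_f^n(K) = F^*(\eta)$ onto $\vktwo^n(K) = c^*(\omega_1^n)$. The discussion just before the lemma already records that $p^{-\geps}$ sends $\eta \in H^n(\rr P^n;\zz^{-\geps})$ to $\omega_1^n$, so by naturality of the coefficient change it suffices to observe that the composite
\[
\cac(K) \xrightarrow{\;F\;} \rr P^n \hookrightarrow \rr P^\infty
\]
is homotopic to $c$. This is immediate because both maps classify the same double cover $\widetilde{\cac}(K)\to \cac(K)$, and classifying maps into the Eilenberg--MacLane space $K(\zz/2,1)=\rr P^\infty$ are unique up to homotopy. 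Hence $\bigl(\Wu_f^n(K)\bigr)_{\zz/2} = \vktwo^n(K)$.

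Second, I would apply the previous lemma with $\phi = \Wu_f^n(K)$ and $\psi = \Phi$. Together with the identification above, it yields
\[
p^{-\geps}\!\bigl(\langle \Wu_f^n(K),\,\Phi\rangle\bigr) \;=\; \langle \vktwo^n(K),\,\Phi_{\zz/2}\rangle.
\]
The first defining condition of an evaluation $n$-cycle is exactly that the right-hand side is the nonzero element of $\zz/2$. Since $p^{-\geps}(0)=0$, the pairing $\langle \Wu_f^n(K),\,\Phi\rangle \in \zz^{-\geps}$ must itself be nonzero, which is precisely the conclusion.

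I do not expect any genuine obstacle: the whole argument is a short diagram chase built from the $\zz \to \zz[\zz/2] \to \zz^-$ sequence used to define $\eta$ and $e_1$. The only point that warrants an explicit sentence in the written proof is the naturality step identifying $(\iota\circ F)^*$ with $c^*$ modulo homotopy; everything else is a formal consequence of the definitions recalled in Subsection~\ref{ss:vk}.
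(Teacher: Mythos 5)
Your argument is correct and is exactly the one the paper intends: the text immediately preceding the lemma asserts that $\Wu_f^n$ reduces to $\vktwo^n$ under the change of coefficients $\zz^{-\geps}\to\zz/2$, and the lemma then follows by combining the compatibility-of-pairings lemma with the first defining condition of an evaluation $n$-cycle, just as you do. Your explicit justification that $F$ followed by the inclusion $\rr P^n \hookrightarrow \rr P^\infty$ classifies the double cover $\widetilde{\cac}(K)\to\cac(K)$ (hence is homotopic to $c$) fills in the one step the paper leaves implicit.
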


It will be convenient later to have the following refinement of $\obdim(G)$.
\begin{definition}
The \emph{$\zz$-valued obstructor dimension} of $G$, denoted $\obdim_{\zz}(G)$, is the maximal $n+2$ so that there is a proper expanding map $f:\Cone_\infty(K) \rightarrow EG$ where $K$ is an evaluation $n$-cycle. 
\end{definition}

\begin{remark}
It is obvious that $\obdim_\zz(G) \le \obdim(G)$. On the other hand, many of the complexes used to compute obstructor dimension contain evaluation cycles that reduce to the nontrivial $\zz/2$-valued obstructor cycles. For example, if $K$ is the $n$-fold join of $3$ points, then $H_n(\cac(K), \zz^{-\geps})$ surjects onto $H_n(\cac(K), \zz/2)$ (this follows for example from the Join Lemma below). Note that in this case $H_n(\cac(K), \zz^{\geps}) = 0$. 
\end{remark}

If $H$ acts on $K$ cellularly, then there is an induced action on $\cac(K)$, and hence an action on $H_n(\cac(K); \zz^{-\geps})$. We always assume that $H$ acts trivially on $\zz^{-\geps}$. 

\begin{definition}
An $H$-complex $K$ is an \emph{equivariant $(n+1)$-obstructor} if
there exists an evaluation cycle $\Phi \in H_n(\cac(K); \zz^{-\geps})$ and a subset $A \subset S(H)$ so that  $$\sum_{h \in A}h_\ast\Phi = 0.$$
 \end{definition}

\begin{lemma}
Suppose that $K$ is a finite $H$-complex and $f: K \rightarrow \rr^n$ is an embedding. Then $\Wu_{f \circ h} = h^\ast(\Wu_f)$. In particular, for all $\Phi \in H_n(\cac(K); \zz^{-\geps})$, $$\langle \Wu_f^n(K), h_\ast\Phi \rangle = \langle \Wu_{f \circ h}^n(K), \Phi \rangle.$$
\end{lemma}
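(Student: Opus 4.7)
The plan is to compute the Gauss map of $f \circ h$ directly and then invoke naturality of pullback and of the evaluation pairing.

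First I would unpack the definition of $\Wu_{f \circ h}$. Recall that the Gauss map associated to an embedding $g: K \to \rr^{n+1}$ is the $\zz/2$-equivariant map
\[
\widetilde G : \widetilde\cac(K) \to \sphere^n, \qquad (x,y) \mapsto \frac{g(x) - g(y)}{\|g(x) - g(y)\|},
\]
which descends to a map $G : \cac(K) \to \rr P^n$, and by definition $\Wu_g^n(K) = G^*(\eta)$. The key observation is that, since $H$ acts diagonally on $\widetilde\cac(K)$ by $h\cdot(x,y) = (hx, hy)$, the Gauss map of $f \circ h$ factors as
\[
\widetilde{F \circ h}(x,y) = \frac{f(hx) - f(hy)}{\|f(hx) - f(hy)\|} = \widetilde F(hx, hy) = (\widetilde F \circ h)(x,y).
\]
This identity is $\zz/2$-equivariant (the diagonal $H$-action commutes with the swap involution $\iota$), so upon passing to the unordered configuration space we obtain $F \circ h = F \circ h$ as maps $\cac(K) \to \rr P^n$, where by abuse of notation $h$ also denotes the induced map on $\cac(K)$.

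Applying contravariance of pullback gives
\[
\Wu_{f \circ h}^n(K) \;=\; (F \circ h)^*(\eta) \;=\; h^* F^*(\eta) \;=\; h^*\bigl(\Wu_f^n(K)\bigr),
\]
which is the first statement. Note that this uses only that $h$ is a cellular map of $K$, not any property of the representation $\rho$ used later.

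For the evaluation statement, I invoke the naturality of the pairing $\langle \cdot , \cdot \rangle : H^n(\cac(K);\zz^{-\geps}) \times H_n(\cac(K);\zz^{-\geps}) \to \zz^{-\geps}$, namely $\langle h^*\alpha, \beta\rangle = \langle \alpha, h_*\beta\rangle$. Here we need to know that $h$ respects the local coefficient system $\zz^{-\geps}$; this holds because $h$ acts on $\widetilde\cac(K)$ by a map that commutes with the swap involution $\iota$, so it preserves the deck transformation structure and hence acts trivially on the coefficients $\zz^{-\geps}$ (as was already noted in the paragraph preceding the definition of equivariant obstructor). Combining, we obtain
\[
\langle \Wu_{f\circ h}^n(K),\, \Phi\rangle \;=\; \langle h^*\Wu_f^n(K),\, \Phi\rangle \;=\; \langle \Wu_f^n(K),\, h_*\Phi\rangle,
\]
which is the desired equality. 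There is no significant obstacle; the whole argument is a direct unwinding of definitions plus two instances of naturality.
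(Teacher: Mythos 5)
Your proof is correct and follows essentially the same route as the paper: the paper records the single chain-level identity $h^\ast \Wu_f^n((\gs,\gt)) = \Wu_f^n((h\gs,h\gt)) = \Wu^n_{f\circ h}((\gs,\gt))$, which is exactly your observation $\widetilde{F\circ h} = \widetilde F\circ h$ expressed on cells. Your version is slightly more explicit about functoriality of the pullback, the coefficient system, and the adjointness $\langle h^*\alpha,\beta\rangle = \langle\alpha,h_*\beta\rangle$, all of which the paper leaves implicit.
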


\begin{proof}
This follows immediately from the equalities $$h^\ast \Wu_f^n((\gs, \gt)) = \Wu_f^n((h\gs, h\gt)) = \Wu^n_{f \circ h}(\gs, \gt).$$
\end{proof}

\begin{lemma}
If $K$ is an equivariant $(n+1)$-obstructor, then $K$ does not equivariantly embed into $\rr^{n+1}$. 
\end{lemma}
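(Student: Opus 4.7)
The plan is to suppose for contradiction that $f : K \to \rr^{n+1}$ is an $H$-equivariant embedding, and to derive a contradiction by pairing the Wu class $\Wu_f^n(K)$ against the vanishing combination $\sum_{h\in A} h_\ast \Phi = 0$ supplied by the equivariant obstructor data.

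First I would invoke Lemma \ref{l:nontrivial}: since $\Phi$ is an evaluation $n$-cycle, the integer $\langle \Wu_f^n(K), \Phi\rangle$ reduces modulo $2$ to $\langle \vktwo^n(K), \Phi_{\zz/2}\rangle \ne 0$, so it is an odd, hence nonzero, integer. Second, I would show that the functional $\langle \Wu_f^n(K),-\rangle$ is $S(H)$-invariant on cycles. Indeed, for any $h \in S(H)$ the homeomorphism $\rho(h)$ is a product of squares, hence orientation-preserving, and by Kirby's result ambient-isotopic to the identity of $\rr^{n+1}$; combined with the equivariance identity $f\circ h = \rho(h)\circ f$ this gives
\[
\Wu_{f\circ h}^n = \Wu_{\rho(h)\circ f}^n = \Wu_f^n.
\]
The preceding lemma then yields
\[
\langle \Wu_f^n(K), h_\ast \Phi\rangle \;=\; \langle \Wu_{f\circ h}^n(K),\Phi\rangle \;=\; \langle \Wu_f^n(K),\Phi\rangle
\]
for every $h \in A \subset S(H)$.

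Summing over $A$ and using the linearity of the evaluation pairing (together with the assumption that $H$ acts trivially on the coefficient module $\zz^{-\geps}$, so $h_\ast$ and summation commute cleanly with pairing against $\Wu_f^n(K)$), I obtain
\[
0 \;=\; \Big\langle \Wu_f^n(K), \sum_{h\in A} h_\ast \Phi\Big\rangle \;=\; |A|\cdot \langle \Wu_f^n(K), \Phi\rangle.
\]
Since $|A|\ge 1$ (otherwise the relation in the definition is vacuous and $\Phi$ cannot satisfy the evaluation condition), this equation in $\zz$ forces $\langle \Wu_f^n(K),\Phi\rangle = 0$, contradicting the first step.

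I do not foresee a substantive obstacle here: the argument is essentially a bookkeeping exercise on top of the two preceding lemmas. The only subtlety worth flagging is that the pairing takes values in $\zz$ rather than $\zz/2$, so that $|A|$ times a nonzero odd integer is genuinely nonzero in $\zz$ — this is precisely what lets a trivial-looking averaging identity produce the contradiction, and it is why the definition of equivariant obstructor was set up with integral (twisted) coefficients rather than $\zz/2$-coefficients.
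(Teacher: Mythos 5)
Your argument is correct and is essentially the paper's own proof: both use the isotopy invariance of the Wu class under the orientation-preserving maps $\rho(h)$ for $h\in S(H)$, the identity $\langle \Wu_f^n, h_\ast\Phi\rangle = \langle \Wu_{f\circ h}^n,\Phi\rangle$, and the vanishing of $\sum_{h\in A} h_\ast\Phi$ to get $0 = |A|\langle \Wu_f^n,\Phi\rangle \ne 0$. Your remarks that the pairing is integer-valued (so $|A|$ times a nonzero value is nonzero) and that $A$ must be nonempty are correct and match the implicit assumptions in the paper.
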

\begin{proof}

Suppose there was a representation $\rho: H \rightarrow \Homeo(\rr^n)$ and an equivariant embedding $f: K \rightarrow \rr^n$. Every $h \in S(H)$ acts by orientation-preserving homeomorphisms on $\rr^n$, so in particular $f$ is isotopic to $f \circ h = \rho(h) \circ f$. Let $A \subset S(H)$ with $\sum_{h \in A}h_\ast\Phi = 0$, where $\Phi$ is an evaluation cycle. We then have that $$0 = \langle \Wu_f^n(K), (\sum_{h \in A} h_\ast\Phi) \rangle =  \sum_{h \in A} \langle\Wu_f^n(K), h_\ast\Phi \rangle$$ $$= \sum_{h \in A}\langle \Wu^n_{f \circ h}(K), \Phi \rangle = |A|\langle \Wu_f^n(K), \Phi \rangle \ne 0$$ where the last inequality is by  Lemma \ref{l:nontrivial}. This is a contradiction. 
\end{proof} 

Note that if $K$ is an equivariant $(n+1)$-obstructor then $K$ does not embed into $\rr^n$.

\subsection{Examples of equivariant obstructors}

We now give our main examples of finite complexes which are equivariant $(n+1)$-obstructors. All our examples are iterated joins or cones of the following. 

\begin{lemma}\label{l:mainex}
Suppose that $\zz/4 = \langle h \rangle$ acts on $K = 5$ points by cyclically permuting $4$ of the points and fixing one. Then $K$ is an equivariant $1$-obstructor.
\end{lemma}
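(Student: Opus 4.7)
The plan is to construct an explicit evaluation $0$-cycle $\Phi \in H_0(\cac(K); \zz^-)$ together with a nonempty subset $A \subseteq S(\zz/4)$ verifying $\sum_{g \in A} g_\ast \Phi = 0$. Label $K = \{v_0, v_1, v_2, v_3, v_4\}$ with $h v_0 = v_0$ and $h v_i = v_{i+1}$ for $i \in \{1,2,3,4\}$ (indices read cyclically in $\{1,2,3,4\}$). Since $n=0$ we have $\geps = +$, so the relevant coefficients are $\zz^-$. Because $K$ has only $0$-cells, $\cac(K)$ is $0$-dimensional with $10$ vertices, so $H_0(\cac(K); \zz^-) = C_0(\cac(K); \zz^-)$ is the free abelian group on unordered pairs subject to $(v_i, v_j) = -(v_j, v_i)$. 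A direct check gives $S(\zz/4) = \langle h^2\rangle = \{1, h^2\}$, where $h^2$ swaps $v_1 \leftrightarrow v_3$ and $v_2 \leftrightarrow v_4$ and fixes $v_0$.

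I will set
$$\Phi := (v_0, v_1) - (v_0, v_3) + (v_1, v_3)$$
and $A = \{1, h^2\}$. To verify condition (i) for an evaluation cycle, recall $\vktwo^0(K) = c^\ast(1) = 1 \in H^0(\cac(K); \zz/2)$, so the pairing $\langle \vktwo^0(K), \Phi_{\zz/2}\rangle$ is the mod $2$ sum of the coefficients of $\Phi$, namely $1 + 1 + 1 \equiv 1 \pmod 2$. For condition (ii), I sum the signed coefficients at each vertex: at $v_0$, $\,1 + (-1) = 0$; at $v_1$, using $a_{(v_1, v_0)} = -a_{(v_0, v_1)} = -1$ and $a_{(v_1, v_3)} = 1$, the sum is $-1 + 1 = 0$; at $v_3$, $\,1 + (-1) = 0$; and trivially $0$ at $v_2, v_4$.

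It remains to check $h^2_\ast\Phi = -\Phi$. Computing term by term: $h^2_\ast(v_0, v_1) = (v_0, v_3)$, $\,h^2_\ast(v_0, v_3) = (v_0, v_1)$, and $h^2_\ast(v_1, v_3) = (v_3, v_1) = -(v_1, v_3)$, so $h^2_\ast\Phi = (v_0, v_3) - (v_0, v_1) - (v_1, v_3) = -\Phi$. Therefore $\sum_{g \in A} g_\ast\Phi = \Phi + h^2_\ast\Phi = 0$, as required.

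There is no serious obstacle here — the argument is a small finite linear-algebra check. The only conceptual point is recognizing that $\Phi$ should be supported on an $h^2$-invariant (but not $h$-invariant) triple $\{v_0, v_1, v_3\}$: on three points one has the standard ``oriented boundary'' cycle realizing the $3$-point obstructor structure, and the $\langle h^2\rangle$-action that reflects this triple automatically negates that cycle, which is exactly what makes $\Phi + h^2_\ast\Phi$ vanish.
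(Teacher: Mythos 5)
Your proof is correct and is essentially the paper's own argument: the paper takes the same cycle $(A,1)+(1,3)+(3,A)$ (identical to your $\Phi$ after rewriting $(3,A)=-(A,3)$ in $\zz^-$ coefficients) and the same subset $A=\{1,h^2\}\subset S(\zz/4)$, asserting $\Phi+h^2_\ast\Phi=0$ together with the two evaluation-cycle conditions. The only difference is that you spell out the sign bookkeeping and the vertex-sum condition explicitly, which the paper leaves to the reader.
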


\begin{proof}

Let $A$ denote the fixed point and $\{1,2,3,4\}$ the other points. Let $\Phi \in H_0(\cac(K); \zz^-)$ be given by $$\Phi = \{(A,1) \otimes 1, (1,3) \otimes 1, (3,A) \otimes 1\}.$$ Then $\langle \vkt(K), \Phi_{\zz/2} \rangle \ne 0, \Phi + h^2_\ast \Phi = 0 \in H_0(\cac(K); \zz^-)$, and $\Phi$ satisfies the second condition of equivariant obstructors.
\end{proof}

In the next two lemmas, we are explicitly identifying $H_n(\cac(K), \zz^{-\geps})$ with the homology of the quotient complex $C_n(\widetilde \cac(K), \zz)/ (c \sim (-1)^{-\geps} \iota_\ast c)$. The next two lemmas mirror the Cone Lemma and the Join Lemma from \cite{bkk}, but unfortunately we have to keep track of signs.
We will let $\Cone K$ denote the finite cone $K \times [0,1]/ K \times 0$, in order to distinguish it from $\Cone_\infty(K)$.  

\begin{ConeLemma}
Suppose that $K$ is an $H$-complex. Let $\Cone K$ be the cone of $K$, and extend the action of $H$ by fixing the cone point.  If $K$ is an equivariant $n$-obstructor, then $\Cone K$ is an equivariant $(n+1)$-obstructor.
\end{ConeLemma}

\begin{proof}
Let $\Phi \in H_n(\cac(\Cone K); \zz^{-\geps})$ be an evaluation cycle, and
define $\Phi' \in C_{n+1}(\cac(\Cone K); \zz^{-\geps})$ by $$\Phi' = \{(-1)^{\dim \sigma}(\sigma, \Cone(\tau)) - (\Cone(\sigma), \tau)) | (\sigma, \tau) \in \Supp \Phi\}$$ and then extending linearly. 

We first claim that this procedure is well-defined. If $(\sigma, \tau) \in \Phi$, then $$(\sigma, \tau) \sim (-1)^{\dim \gs \dim \gt + n-1}(\tau, \gs).$$ which produces $$ (-1)^{\dim \gs \dim \gt + \dim \gs -1}(\tau, \Cone(\gs)) +  (-1)^{\dim \gs \dim \gt + n}(\Cone(\tau), \gs))$$

So, we are done by the equalities $$\dim \gs + \dim \gs (\dim \gt + 1) + n = \dim \gs \dim \gt + n\mod 2$$  $$(\dim(\sigma) + 1)\dim \tau + 1 = \dim \gs \dim \gt + \dim \gs -1 \mod 2 $$

We now check that $\Phi'$ is a cycle. Since $\partial(\sigma, \tau) = (\partial \gs, \tau) + (-1)^{\dim \gs}(\gs, \partial \gt)$, we have exactly set it up so that $\partial\Phi' (\gs, \gt) = 0$. Since $\Phi$ is an evaluation cycle, for a cell $(\sigma, \Cone(\alpha) )$, we have $$\sum_{\alpha \subset \tau} a_{\sigma, \tau} = 0.$$ The cells $(\sigma, \Cone(\tau))$ are the cells containing $(\sigma, \Cone(\alpha) )$, and since these are multiplied by the same constant, we have  $d\Phi'((\sigma, \Cone(\alpha))= 0$. For the nontriviality condition, $\Phi'_{\zz/2}$ is precisely the chain constructed in the Cone Lemma  of \cite{bkk}, which was shown to have $\vktwo^n(\Phi'_{\zz/2}) \ne 0$ as long as $\vktwo^n(\Phi_{\zz/2}) \ne 0$. If $\sum_{h \in A} h\Phi = 0$ then obviously $\sum_{h \in A} h\Phi' = 0$. Finally, it is straightforward to check that $\Phi$ satisfies the second condition of equivariant obstructors if $\Phi$ does. 
\end{proof}

\begin{JoinLemma}
Let $K$ be an $H$-complex which is an equivariant $(n + 1)$-equivariant obstructor, and suppose that $J$ is a complex with an evaluation $m$-cycle. Let $H$ act on $K \ast J$ by permuting the $K$-factor and fixing $J$. Then $K \ast J$ is an equivariant $(n + m + 3)$-obstructor.
\end{JoinLemma}

\begin{proof}

Let $\Phi_K$ be an evaluation cycle in $H_{n}(\cac(K); \zz^{(-1)^{n}})$ with $$\sum_{h \in A} h_\ast(\Phi_K) = 0$$ and let $\Phi_J$ be the evaluation $m$-cycle in $H_{n}(\cac(J); \zz^{(-1)^{m}})$. 
Form a new cycle $\Phi \in H_{n+m+2}(\cac(K \ast J); \zz^{(-1)^{n+m+2}})$ by putting in for every $(\gs, \gt) \in \Phi_K$ and $(\gs', \gt') \in \Phi_J$ the chains $$(-1)^{\dim \gs(\dim\gt' + 1) + \dim\gs'}(\gs \ast \gs', \gt \ast \gt')\hspace{1mm} + \hspace{1mm} (-1)^{\dim (\gs + 1)(\dim \gs' + 1) + \dim \gs' \dim \gt'}(\sigma \ast \tau', \gt \ast \gs')$$ and extending linearly. 

We need to show that this procedure is well-defined after passing to the quotient complexes $C_\ast(\widetilde \cac(K_i))/\iota$. In particular, we need to show that the involuted $(\gs, \gt)$ and $(\gs', \gt')$ produce equivalent chains. For these calculations, we will shorten $\dim \gs$ in the exponents to $\gs$. 

\begin{itemize}[leftmargin=*]

\item $(\gs, \gt) \text{ and } (\gt', \gs')$. In this case, the procedure gives $$(-1)^{\gs(\gs'+1) + \gt'} (\sigma \ast \tau', \tau \ast \sigma') + (-1)^{(\gs+1)(\gt' + 1) + \gt'\gs'}(\gs \ast \gs', \gt \ast \gt').$$

Since $(\gt', \gs') \cong (-1)^{\gs' \gt' + \gt' + \gs' + 1} (\gs', \gt')$  we need to check that: $$\gs' \gt' + \gt' + \gs' + 1 + (\gs+1)(\gt' + 1) + \gt'\gs' = \gs (\gt'+1) + \gs' \mod 2$$ $$(\gs + 1)(\gs' + 1) + \gs'\tau' = \gs (\gs' + 1) + \gt'  + \gs'\gt' + \gs' + \gt' + 1\mod 2$$
which is easily verifed.

 \item $(\gt, \gs) \text{ and } (\gs', \gt')$  In this case, the procedure gives $$(-1)^{\gt(\gt'+1) + \gs'}(\gt\ast\gs', \gs \ast \gt') + (-1)^{(\gt+1)(\gs'+1) + \gs'\gt'}(\gt\ast\gt', \gs\ast\gs').$$ Since $(\gs \ast \gs', \gt \ast \gt) \cong (-1)^{(\gs + \gs')(\gt + \gt')}$, we need to show that 
$$\gs(\gt'+1) + \gs' + (\gs + \gs')(\gt + \gt') = \gs\gt + \gs + \gt + 1 + (\gt+1)(\gs' + 1) + \gs'\gt' \mod 2$$ 
$$(\gs + 1)(\gs' + 1) + \gs'\gt' + (\gs + \gt')(\gt + \gs') =   \gs\gt + \gs + \gt + 1 +\gt(\gt'+1) +\gs'\mod 2$$
which again is easily verified.

\end{itemize}

Again, $\Phi_{\zz/2}$ is the cycle constructed in the Join Lemma of \cite{bkk}.
To see that $\Phi$ is  a cycle, assume that we have a $(n_1 + n_2 +1)$-cell in $K_1 \ast K_2$. We can assume without loss of generality that this cell is of the form $(\sigma_1 \ast \alpha_2, \tau_1 \ast \tau_2)$, where $\dim(\gs_1) + \dim(\gt_1) = n_1$ and $\dim(\ga_2) + \dim(\gt_2) = n_2 -1$. 
This cell is contained precisely in the cells $(\sigma_1 \ast \sigma_2, \tau_1 \ast \tau_2)$ where $\alpha_2 \subset \sigma_2$. Since $\Phi_2$ is a cycle, we have that the sum $$\sum_{\substack{\sigma_2, \tau_2 \in \Phi_2 \\ \alpha_2 \subset \sigma_2}}a_{(\gs_2, \gt_2)} = 0.$$ Since for each of these cells, $ (\sigma_1 \ast \sigma_2, \tau_1 \ast \tau_2)\otimes a_{\sigma_1 \tau_1}a_{\sigma_2\tau_2} \in \Phi$ and have the same sign, it follows that $\Phi$ is a cycle (if $\alpha_2 = \emptyset$ then we require the second item in the definition of evaluation cycle to prove this).

Now, suppose that $\sum_{h \in A} h_\ast\Phi_K = 0$. Then for any $(\gs, \gt) \in \cac(K)$, the sum $\sum_{h \in A} a_{h\gs,h\gt} = 0$. This immediately implies that for any $(\gs \ast \gs', \gt \ast \gt')$, the sum $\sum_{h \in A} a_{h(\gs \ast \gs'), h(\gt \ast \gt')} = 0$, so $\Phi = 0$. 
The second condition of an equivariant obstructor is trivially satisfied since no simplices in $\Phi$ are paired with vertices.
\end{proof}

\begin{remark}
The usual homological tool to analyze group actions on $\mathbb{S}^n$ or $\rr^n$ is \emph{Smith theory}, and this handles far more examples than our method does. For example, if $p$ is a prime, then the fixed set of a orientation preserving $\zz/p$-action on $S^n$ is a homology $r$-sphere with $r < n-1$. This immediately obstructs $(p+1)$-points with $\zz_p$-action as above equivariantly embedding into $S^1$, as well as all joins of this complex with the product action embedding into $S^{2n+1}$. 

 The reason that we do not use Smith theory is that we do not have an adequate version of coarse Smith theory that could handle the examples of groups that we were interested. A coarse version of Smith theory has been developed by Hambleton and Savin \cite{hs},
but it does not seem to be applicable to our examples. In particular, they relate the coarse topology of an ambient $G$-space $X$ to the coarse topology of a ``bounded fixed point set''. This consists of points in $X$ which are fixed up to bounded distance by every element of $G$, which in our main examples (when $G$ is torsion-free) is always empty. In Section 6 we will consider some examples with torsion, and the methods of \cite{hs} do probably obstruct actions on uniformly contractible manifolds.  
\end{remark}

\begin{remark}
This use of the $\Wu$ invariant is our attempt to build an ``equivariant van Kampen obstruction". A natural place for such an invariant to live is in the equivariant cohomology group $H^\ast_H(\cac(K), \zz^{\pm \geps}$), but we couldn't make this work. One difficulty is that if the $H$-action on $\rr^{n+1}$ is not affine, then there is not an induced $H$-action on $\rr P^n$. The larger problem is that the usual applications of equivariant obstruction theory require knowing both the $H$-action on the domain and range, whereas we are only given the $H$-action on the domain. 
\end{remark}

\section{Coarse Wu Invariant}\label{s:coarsewu}

Let $K$ be a finite complex. 
Equip $\rr^{n+1}$ with a proper metric, and suppose that $f: \Cone_\infty K \rightarrow \rr^{n+1}$ is a proper, expanding map. Then there are induced maps $f_t: \Cone K \rightarrow \rr^{n+1}$ defined by $$f_t(x,s) = f(x, st) \text{ for } s \in [0,1], t \in [0,\infty)$$ 

Again, $\Cone K$ here denotes the finite cone $K \times [0,1]/K \times 0$. The basic idea behind defining the coarse Wu invariant is that if $f$ is a proper, expanding map, the $f_t$ will eventually be \emph{almost embeddings}, where an almost embedding maps disjoint simplices of the cone disjointly. An almost embedding suffices to define the $\Wu$ invariant, and for large enough $t$ this will stabilize to give a well-defined class in $H^n(\cac(\Cone K), \zz^{-\geps})$. We also want the $\Wu$ invariant to not change if we postcompose $f$ with a homeomorphism of $\rr^n$ which is isotopic to the identity. This composition may no longer be expanding, but will be an almost embedding, and furthermore will be isotopic to $f$ via almost embeddings. Therefore, we will eventually define the coarse Wu invariant for all maps $f: \Cone_\infty(K) \rightarrow \rr^n$ which are isotopic to proper, expanding maps. 

\begin{lemma}
Let $f: \Cone_\infty K \rightarrow \rr^{n+1}$ be a proper, expanding map. Then there exists a $T_f > 0$ so that for all $t > T_f$, $f_t$ is an almost embedding.
\end{lemma}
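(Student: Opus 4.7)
The plan is to fix a pair of disjoint closed simplices in $\Cone K$ and find a threshold beyond which their $f_t$-images stay disjoint; since $K$ is finite, taking the maximum of these thresholds over the finitely many such pairs yields the desired $T_f$. So the main content is a per-pair argument.

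First I would classify the disjoint simplex pairs in $\Cone K$. Since any two cone simplices share the cone point, the disjoint pairs come in two types: (A) two disjoint simplices $\sigma, \tau$ of $K$ regarded as sitting at the top level of the cone, and (B) a simplex $\sigma$ of $K$ together with a cone simplex $\Cone \tau$ on a simplex $\tau$ of $K$ with $\sigma \cap \tau = \emptyset$. Type (A) is immediate from the expanding hypothesis: one has $f_t(\sigma) = f(\sigma \times \{t\}) \subset f(\sigma \times [t,\infty))$ and similarly for $\tau$, so the distance between their images is at least $d(f(\sigma \times [t,\infty)), f(\tau \times [t,\infty)))$, which tends to infinity with $t$.

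Type (B) is the real case and the slight obstacle is that $f_t(\Cone \tau) = f(\tau \times [0,t])$ includes the initial portion near the cone point, on which the expanding condition gives no information. The trick is to split $[0,t] = [0,T_0] \cup [T_0, t]$ at a fixed threshold $T_0$ chosen (using expanding) so that
\[
  d(f(\sigma \times [T_0,\infty)), f(\tau \times [T_0,\infty))) > 0.
\]
For $t > T_0$, the upper piece $f(\tau \times [T_0, t]) \subset f(\tau \times [T_0,\infty))$, and $f(\sigma \times \{t\}) \subset f(\sigma \times [T_0,\infty))$, so this piece is disjoint from $f_t(\sigma)$ directly by expanding.

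For the compact initial piece $f(\tau \times [0,T_0])$, properness is essential. The set $f(\tau \times [0,T_0])$ is compact, so its preimage $f^{-1}(f(\tau \times [0,T_0]))$ is compact in $\Cone_\infty K$, hence contained in $K \times [0, T_1]$ for some $T_1$. Then for $t > T_1$, the slice $\sigma \times \{t\}$ lies outside this preimage, so $f(\sigma \times \{t\}) \cap f(\tau \times [0,T_0]) = \emptyset$. Setting $T(\sigma, \Cone \tau) = \max(T_0, T_1)$ handles this pair. Finally, take $T_f$ to be the maximum of the finitely many thresholds $T(\sigma_1, \sigma_2)$ obtained from all disjoint simplex pairs in $\Cone K$. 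The only conceptually subtle point is noticing that expanding alone cannot control the part of $\Cone \tau$ near the cone point, so one must genuinely combine expanding with properness via this split.
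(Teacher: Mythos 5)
Your argument is correct and is essentially the paper's proof: the paper likewise first uses the expanding condition to find a level $T_f'$ beyond which images of disjoint simplices' tails $\sigma\times[T_f',\infty)$ and $\tau\times[T_f',\infty)$ are disjoint, and then uses properness to find $T_f\ge T_f'$ so that the compact initial piece $f(\tau\times[0,T_f'])$ misses $f(\sigma\times[T_f,\infty))$. Your split at $T_0$ and $T_1$ matches this exactly, just written out with the case analysis of disjoint pairs in $\Cone K$ made explicit.
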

\begin{proof}
By the definition of expanding, there exists $T_f' > 0$ so that  $\sigma \times [T_f',\infty)$ and $\tau \times [T_f',\infty)$ are disjoint for each pair of disjoint simplices $(\sigma, \tau)$. 
Since $f$ is proper, there exists $T_f \ge T_f'$ so that $\sigma \times [0, T_f'] \cap \tau \times [T_f, \infty]$ for each pair of $\sigma$ and $\tau$ as above. Therefore, $f_t$ for all $t > T_f$ is an almost embedding. 
\end{proof}

Therefore, for $f$ proper and expanding, there exists $T_f > 0$ so that for all $t > T_f$, there is a well-defined Gauss map $F_t: \widetilde \cac(\Cone K) \rightarrow \mathbb{S}^n$ $$\widetilde F_t((x,s), (y,s')) = \frac{f_t(x,s) - f_t(y, s')}{||f_t(x,s) - f_t(y, s')||}$$ where either $s$ or $s' = 1$ since $\widetilde \cac(\Cone K)$ is the simplicial configuration space.

\begin{lemma}\label{l:coarsewu1}
For all $t, t' > T_f$, $\widetilde F_t$ and $\widetilde F_{t'}$ are $\zz/2$-equivariantly homotopic. 
\end{lemma}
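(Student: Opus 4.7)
The plan is to exhibit an explicit $\zz/2$-equivariant homotopy by linear interpolation of the scaling parameter. Without loss of generality assume $t < t'$, and for $r \in [0,1]$ set $t_r = (1-r)t + rt'$, so that $t_r > T_f$ for every $r \in [0,1]$. By the previous lemma, each $f_{t_r}: \Cone K \to \rr^{n+1}$ is an almost embedding, hence the Gauss map
\[
\widetilde F_{t_r}\bigl((x,s),(y,s')\bigr) = \frac{f(x,st_r) - f(y,s't_r)}{\|f(x,st_r) - f(y,s't_r)\|}
\]
is well-defined on $\widetilde \cac(\Cone K)$ for every $r$.

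Next, I would define the candidate homotopy $\widetilde H : \widetilde \cac(\Cone K) \times [0,1] \to \mathbb{S}^n$ by
\[
\widetilde H\bigl((x,s),(y,s'),r\bigr) = \widetilde F_{t_r}\bigl((x,s),(y,s')\bigr).
\]
Continuity is immediate: $f$ is continuous, the rescalings $(x,s,r)\mapsto(x,st_r)$ and $(y,s',r)\mapsto(y,s't_r)$ are continuous, and the denominator $\|f(x,st_r)-f(y,s't_r)\|$ is bounded away from zero on each compact subset of $\widetilde \cac(\Cone K) \times [0,1]$ by the almost-embedding property (applied uniformly in $r$, using that $t_r$ ranges over the compact interval $[t,t']$ of values all exceeding $T_f$). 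At the endpoints, $\widetilde H(\cdot,0) = \widetilde F_t$ and $\widetilde H(\cdot,1) = \widetilde F_{t'}$, as required.

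For $\zz/2$-equivariance, I would check that the involution $\iota$ on $\widetilde \cac(\Cone K)$ that swaps the two factors induces the antipodal action on $\mathbb{S}^n$ through $\widetilde H$ for every $r$: indeed, swapping the two arguments of $\widetilde F_{t_r}$ negates the numerator while leaving the norm invariant, so $\widetilde F_{t_r} \circ \iota = -\widetilde F_{t_r}$. Therefore $\widetilde H(\iota(p), r) = -\widetilde H(p,r)$ for all $p, r$, which means $\widetilde H$ is a $\zz/2$-equivariant homotopy from $\widetilde F_t$ to $\widetilde F_{t'}$.

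The argument is essentially formal once $t_r > T_f$ throughout the interpolation; the only conceptual point is to verify that the almost-embedding condition persists along the straight-line homotopy in the parameter, which follows directly from the defining inequality for $T_f$. No serious obstacle arises here, but this lemma is what later permits defining the coarse Wu class as the common value $F_t^\ast(\eta)$ for $t \gg 0$.
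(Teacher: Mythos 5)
Your argument is exactly the paper's: interpolate through the maps $f_{t_r}$ for $t_r$ ranging over $[t,t']$, note each exceeds $T_f$ so each Gauss map is well defined, and observe that these assemble into a $\zz/2$-equivariant homotopy. You spell out continuity and equivariance more explicitly than the paper does, but the route is the same.
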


\begin{proof}
There is an obvious homotopy of $f_t$ to $f_{t'}$ by the $\{f_s\}_{s \in [t,t']}$. Since each $f_s$ induces a well-defined Gauss map $F_s$, these give a $\zz/2$-equivariant homotopy between $\widetilde F_t$ and $\widetilde F_{t'}$. 
\end{proof}

We will say that $f,f': \Cone_\infty(K) \rightarrow \rr^n$ are \emph{isotopic} if there is an ambient isotopy $\{j^s\}_{s \in [0,1]}$ of $\rr^n$ with $j_0$ the identity and $j_1 \circ f = f'$. In particular, we are not assuming $f$ and $f'$ are embeddings. 

\begin{lemma}\label{l:coarsewu2}
Suppose that $f: \Cone_\infty(K) \rightarrow \rr^{n+1}$ and  $f': \Cone_\infty(K) \rightarrow \rr^{n+1}$ are isotopic maps and that $f$ is proper and expanding. Then for $T_{f}$ as above and $t > T_{f}$, we have that the Gauss maps $\widetilde F_t$ and $\widetilde F'_t$ are $\zz/2$-equivariantly homotopic. 
\end{lemma}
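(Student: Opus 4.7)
The plan is to use the ambient isotopy $\{j^s\}_{s\in[0,1]}$ between $f$ and $f'$ to produce an interpolating one-parameter family of almost embeddings of $\Cone K$, and then to verify that their Gauss maps assemble into the desired $\zz/2$-equivariant homotopy.

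First, fix $t > T_f$. Then $f_t$ is an almost embedding by the previous lemma, and we note that $f'_t = j^1 \circ f_t$ by a direct computation using the definition $f_t(x,s) = f(x,st)$. For each $s \in [0,1]$, set
\[
    f^s_t \;:=\; j^s \circ f_t \colon \Cone K \longrightarrow \rr^{n+1}.
\]
Since $j^s$ is a homeomorphism of $\rr^{n+1}$, it carries disjoint sets to disjoint sets; thus if $\gs, \gt$ are disjoint simplices in $\Cone K$ then $f^s_t(\gs) \cap f^s_t(\gt) = j^s(f_t(\gs)) \cap j^s(f_t(\gt)) = \emptyset$, so each $f^s_t$ is an almost embedding. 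This is the only point at which the hypothesis that $f$ (and not $f'$) is expanding is used: it guarantees a single common $T_f$ that works for all of $\{f^s_t\}_{s\in[0,1]}$.

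Next, for each $s$ define the Gauss map $\widetilde F^s_t \colon \widetilde \cac(\Cone K) \to \sphere^n$ by
\[
    \widetilde F^s_t\bigl((x,u),(y,v)\bigr) \;=\; \frac{j^s(f_t(x,u)) - j^s(f_t(y,v))}{\lVert j^s(f_t(x,u)) - j^s(f_t(y,v))\rVert}.
\]
The denominator is nonzero precisely because each $f^s_t$ is an almost embedding, so the formula makes sense on all of $\widetilde \cac(\Cone K)$. Swapping the two arguments negates the numerator, so each $\widetilde F^s_t$ intertwines the factor-swap involution on $\widetilde\cac(\Cone K)$ with the antipodal involution on $\sphere^n$; that is, each $\widetilde F^s_t$ is $\zz/2$-equivariant. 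Joint continuity of $(s,\xi) \mapsto \widetilde F^s_t(\xi)$ follows from continuity of the isotopy $j^s$ and of $f_t$ together with the fact that the denominator never vanishes.

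At $s=0$ we recover $\widetilde F_t$, and at $s=1$ we obtain the Gauss map of $j^1 \circ f_t = f'_t$, which is $\widetilde F'_t$. Hence $(s,\xi)\mapsto \widetilde F^s_t(\xi)$ is the desired $\zz/2$-equivariant homotopy. I do not anticipate a serious obstacle here; the only subtle point is noticing that ambient homeomorphisms automatically preserve the almost-embedding condition, which is what allows the Gauss map to be defined throughout the isotopy even though $f'$ is not assumed proper or expanding.
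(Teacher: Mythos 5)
Your proof is correct and is essentially the same as the paper's: both use the ambient isotopy to form the family $j^s \circ f_t$, observe that homeomorphisms preserve disjointness of simplices so each stage admits a well-defined Gauss map, and read off the $\zz/2$-equivariant homotopy. Your write-up just spells out the details (the identity $f'_t = j^1\circ f_t$, equivariance, continuity) that the paper leaves implicit.
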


\begin{proof}
Let $\{j^s\}_{s \in [0,1]}$ be an isotopy between $f$ and $\bar f$. The homeomorphisms $j^s$ preserve disjointness of simplices, so for each $t > T_{f}$, the map $j^s_t := j^s \circ f_t$ induces a well-defined Gauss map $J^s_t$. The $\{J^s_t\}_{s \in [0,1]}$ give a $\zz/2$-equivariant homotopy between $\widetilde F_t$ and $\widetilde F'_t$. 
\end{proof}

\begin{definition}
Suppose that $\bar f: \Cone_\infty K \rightarrow \rr^{n+1}$ is isotopic to a proper expanding map $f: \Cone_\infty K \rightarrow \rr^{n+1}$, and $T_{f} > 0$ is defined as above. Suppose that $\widetilde F_t: \widetilde \cac(\Cone K) \rightarrow \mathbb{S}^n$ is the induced Gauss map for $f_t$ as above, and let $F_t$ denote the induced map $\cac(\Cone K) \rightarrow \rr P^n$. Let $\eta$ be the generator of $H^n(\rr P^n; \zz^{-\geps})$. 
The \emph{coarse Wu invariant} of $\bar f$, denoted $\CWu_{\bar f}^n(K)$, is defined to be $$ F_t^*(\eta) \in H^n(\cac(\Cone K); \zz^{-\geps})$$ for $t > T_f$. 
\end{definition}

The coarse Wu invariant is well-defined (i.e. does not depend on $t$ or $f$)  by Lemmas \ref{l:coarsewu1} and \ref{l:coarsewu2}. 

\begin{lemma}\label{l:coarsewu4}
Suppose that $f: \Cone_\infty K \rightarrow \rr^{n+1}$ is a proper, expanding map, and $g$ is an orientation-preserving homeomorphism of $\rr^{n+1}$. Then $\CWu_f^n(K) = \CWu_{g \circ f}^n(K)$. 
\end{lemma}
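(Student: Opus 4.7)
The plan is very short: I would reduce immediately to the previously proved Lemma~\ref{l:coarsewu2}. The input is that every orientation-preserving homeomorphism of $\rr^{n+1}$ is ambient isotopic to the identity (this is a classical fact used, e.g., in the discussion preceding the definition of equivariant obstructor in Section~\ref{s:equivariant}, and is the reason we could ignore the action of $S(H)$ on $\Wu_f$). Concretely, pick an ambient isotopy $\{j^s\}_{s\in[0,1]}$ of $\rr^{n+1}$ with $j^0 = \id$ and $j^1 = g$. Then $\{j^s \circ f\}_{s\in[0,1]}$ is an ambient isotopy (in the sense used in the paragraph before Lemma~\ref{l:coarsewu2}) from $f$ to $g\circ f$.

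Next I would apply Lemma~\ref{l:coarsewu2} to the pair $(f, g\circ f)$. Since $f$ is proper and expanding by hypothesis, the lemma produces a threshold $T_f$ such that for every $t > T_f$ the Gauss maps $\widetilde F_t$ (of $f_t$) and $\widetilde{(g\circ f)}_t$ (of $(g\circ f)_t$) are $\zz/2$-equivariantly homotopic. Passing to the quotient by the switch involution, the induced maps $F_t,(G\circ F)_t\colon \cac(\Cone K) \to \rr P^n$ are homotopic, hence pull back the generator $\eta \in H^n(\rr P^n;\zz^{-\geps})$ to the same cohomology class in $H^n(\cac(\Cone K);\zz^{-\geps})$.

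By the definition of $\CWu$ immediately above, this common pullback equals $\CWu_f^n(K)$ on the one hand and $\CWu_{g\circ f}^n(K)$ on the other (note that $g\circ f$ is isotopic to the proper expanding map $f$, so the definition of $\CWu_{g\circ f}^n(K)$ applies via Lemma~\ref{l:coarsewu2} with the same threshold $T_f$). This yields the equality $\CWu_f^n(K) = \CWu_{g\circ f}^n(K)$.

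The only potential obstacle is citing the isotopy-to-identity result for an arbitrary orientation-preserving homeomorphism of $\rr^{n+1}$; in the equivariant discussion this is attributed to Kirby, and the same citation covers the present lemma. Once that is in hand the statement is a direct corollary of Lemma~\ref{l:coarsewu2}, with no further computation required.
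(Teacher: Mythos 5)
Your proposal is correct and is essentially the paper's own argument: the paper likewise observes that an orientation-preserving homeomorphism of $\rr^{n+1}$ is isotopic to the identity, so $f$ and $g\circ f$ are isotopic, and then invokes Lemma \ref{l:coarsewu2} together with the definition of $\CWu$. Your write-up just spells out the application of Lemma \ref{l:coarsewu2} in more detail.
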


\begin{proof}
Since $g$ is orientation-preserving, it is isotopic to the identity. Therefore, $f$ and $g \circ f$ are isotopic, so the coarse $\Wu$ invariant is defined for $g \circ f$ and equal to $\CWu^n_f(K)$. 
\end{proof}

\begin{Theorem}\label{l:coarsewu3}
 Let $f: \Cone_\infty K \rightarrow \rr^{n+1}$ be a proper, expanding map.  Suppose that $f':\Cone_\infty K \rightarrow \rr^{n+1}$ is properly homotopic to $f$ through maps $\{h^s\}_{s \in [0,1]}$ so that for each  $\gs \subset K$, \begin{equation}h^s(\Cone_\infty \sigma) \subset N_R(f(\Cone_\infty \sigma))\tag{$\dagger$}\end{equation} for some $R > 0$.  Then $\CWu^n_f(K) = \CWu^n_{f'}(K)$.
\end{Theorem}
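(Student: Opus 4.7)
\textbf{Proof plan for Theorem \ref{l:coarsewu3}.} The strategy mirrors Lemma \ref{l:coarsewu2}: produce, for all sufficiently large $t$, a $\zz/2$-equivariant homotopy between the Gauss maps $\widetilde F_t$ and $\widetilde F'_t$ by running the rescaled homotopy $h^s_t(x,u) := h^s(x,ut)$ and showing that each $h^s_t$ is an almost embedding. The content of the theorem then reduces to establishing the almost-embedding property uniformly in $s$, after which the Gauss maps $\widetilde H^s_t$ depend continuously on $s$, descend to a homotopy $\cac(\Cone K)\to \rr P^n$, and pull back $\eta$ to the same class, proving $\CWu^n_f(K) = \CWu^n_{f'}(K)$.

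To see what almost-embedding entails, first enumerate the disjoint pairs of simplices in $\Cone K = K \ast \{\mathrm{pt}\}$: they are either (a) two disjoint simplices $\sigma,\tau\subset K$, or (b) a cone $\Cone\sigma$ and a simplex $\tau\subset K$ with $\sigma\cap\tau=\emptyset$ (two non-equal cones always share the cone point, so they are never disjoint). Using that $f$ is proper and expanding, there exists for each disjoint pair $(\sigma,\tau)\subset K$ a threshold $T(\sigma,\tau)$ above which $d\bigl(f(\sigma\times[T(\sigma,\tau),\infty)),\,f(\tau\times[T(\sigma,\tau),\infty))\bigr)>2R$; since the "bottom parts" $f(\sigma\times[0,T(\sigma,\tau)])$ are compact, the intersection $N_R(f(\Cone_\infty\sigma))\cap N_R(f(\Cone_\infty\tau))$ is contained in a bounded set. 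Taking the union over the finitely many disjoint pairs yields a single bounded ball $\bar B\subset\rr^{n+1}$ containing all these intersections.

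Now use properness of the homotopy $H:\Cone_\infty K\times[0,1]\to\rr^{n+1}$. The preimage $H^{-1}(\bar B)$ is compact, and its projection to $\Cone_\infty K$ is compact, hence (since $K$ is a finite complex) has bounded cone-coordinate $\le T_0$ for some $T_0$. So for $t>T_0$ and every simplex $\tau\subset K$ and $s\in[0,1]$, one has $h^s(\tau\times\{t\})\cap \bar B=\emptyset$. Combined with hypothesis $(\dagger)$, this rules out both intersection types: in case (a) any point of $h^s(\sigma\times\{t\})\cap h^s(\tau\times\{t\})$ would lie in $N_R(f(\Cone_\infty\sigma))\cap N_R(f(\Cone_\infty\tau))\subset \bar B$, contradicting $h^s(\tau\times\{t\})\cap \bar B=\emptyset$; in case (b) an intersection point $y\in h^s(\sigma\times[0,t])\cap h^s(\tau\times\{t\})$ would satisfy $y\in N_R(f(\Cone_\infty\sigma))\cap N_R(f(\Cone_\infty\tau))\subset\bar B$ by $(\dagger)$ applied to both $\sigma$ and $\tau$, again contradicting $h^s(\tau\times\{t\})\cap\bar B=\emptyset$. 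Hence $h^s_t$ is an almost embedding for every $s\in[0,1]$.

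Once this is in hand, for any fixed $t>T_0$ the Gauss maps $\widetilde H^s_t:\widetilde\cac(\Cone K)\to\sphere^n$ are defined, $\zz/2$-equivariant, and vary continuously in $s$, providing a $\zz/2$-equivariant homotopy between $\widetilde F_t=\widetilde H^0_t$ and $\widetilde F'_t=\widetilde H^1_t$. Passing to the quotient gives a homotopy $F_t\simeq F'_t:\cac(\Cone K)\to\rr P^n$, and pulling back $\eta\in H^n(\rr P^n;\zz^{-\geps})$ shows $F_t^*(\eta)=F'^*_t(\eta)$, i.e., $\CWu^n_f(K)=\CWu^n_{f'}(K)$. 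The main obstacle is the careful case (b) estimate: ruling out intersections between the image of an entire compact cone $h^s(\sigma\times[0,t])$ and the high-level slice $h^s(\tau\times\{t\})$, which is where all three hypotheses—the containment $(\dagger)$, the expanding property of $f$, and the properness of the full homotopy $H$—must be used together.
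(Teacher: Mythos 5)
Your proof is correct and follows essentially the same route as the paper's: both arguments combine the expanding property of $f$, the containment $(\dagger)$, and properness of the homotopy to show that for large $t$ each $h^s_t$ is an almost embedding, so the Gauss maps are homotopic. The only difference is organizational — you bound the ``danger zone'' $N_R(f(\Cone_\infty\sigma))\cap N_R(f(\Cone_\infty\tau))$ by a compact set and push the high cone levels out of it, whereas the paper finds a threshold $T'$ past which $h^s(\sigma\times[T',\infty))$ stays $R$-close to $f(\sigma\times[T,\infty))$ — but the underlying compactness argument is identical.
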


\begin{proof}

Choose $T > 0$ so that  $\gs \times [T, \infty)$ and $\gt \times [0, \infty)$ have distance $> 2R$ for each pair of disjoint simplices $\gs$ and $\gt$ in $K$.
Since the homotopy is proper and satisfies $(\dagger)$, there exists a $T' > T$ so that $h^s(\gs \times [T',\infty)))$ is contained in the $R$-neighbourhood of $f(\gs \times [T, \infty))$. Otherwise, there would be a sequence of points $x_i$ with $x_i \rightarrow \infty$ in $\Cone_\infty(K)$ and $h_s(x_i) \in B_{R+T}(f(x,0))$, contradicting properness. 

Therefore, since  $h^s(\gt \times [0, \infty) \subset N_R(f(\gt \times [0,\infty)))$, for disjoint simplices $\sigma$ and $\tau$ in $K$, $h^s(\gs \times T')$ and  $h^s(\gt \times [0, \infty))$ are disjoint for all $s$.  So, for all $t > T'$, the $\{h^s_t\}_{s \in [0,1]}$ induce a well-defined Gauss map from $\widetilde \cac(\Cone K)$ to $\mathbb{S}^n$. These maps provide a homotopy between $F_t$ and $F'_t$. 
\end{proof}

Note that if $K$ is a complex with an evaluation cycle $\Phi \in H_{n-1}(\cac(K); \zz^{-\geps})$, and $f: \Cone_\infty(K) \rightarrow \rr^{n+1}$ is a proper expanding map, then by the Cone Lemma and Lemma \ref{l:nontrivial}, there is an evaluation cycle $\Phi' \in H_n(\cac(\Cone K; \zz^{-\geps})$ that $\CWu^n_f$ evaluates nontrivially on. 

\begin{lemma}\label{bdddistance}
Suppose that $f: \Cone_\infty(K) \rightarrow \rr^{n+1}$ is a proper expanding map, that $\rr^{n+1}$ is uniformly contractible around the image of $f$, and $f': \Cone_\infty(K) \rightarrow \rr^{n+1}$ is uniformly bounded distance from $f$, i.e. $\exists C> 0$ so that $$d(f(x,s), f'(x,s)) < C \text{ for all } (x,s) \in \Cone(K).$$ Then $\CWu_f^n(K) = \CWu_{f'}^n(K))$. 
\end{lemma}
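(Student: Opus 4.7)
The plan is to apply Theorem \ref{l:coarsewu3} by producing a proper homotopy $H\colon \Cone_\infty(K) \times [0,1] \to \rr^{n+1}$ from $f$ to $f'$ satisfying the condition $(\dagger)$, namely that for each simplex $\sigma$ of $K$, $H(\Cone_\infty \sigma \times [0,1]) \subset N_R(f(\Cone_\infty \sigma))$ for some $R > 0$. Let $\phi \colon \rr^+ \to \rr^+$ denote the uniform contractibility function of $\rr^{n+1}$ around $f(\Cone_\infty K)$. Since $d(f(x), f'(x)) < C$ everywhere, the point $f'(x)$ always lies in $B(f(x), C)$, which contracts inside $B(f(x), \phi(C))$.

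I will build $H$ by induction over the skeleta of $\Cone_\infty(K) \times [0,1]$, with the boundary values fixed as $H(\cdot,0)=f$ and $H(\cdot,1)=f'$. On each vertex $v$, I connect $f(v)$ and $f'(v)$ by a path inside $B(f(v), \phi(C))$, which exists by the previous paragraph. For a general $k$-cell $c$ of $\Cone_\infty(K)$, the partial $H$ on $\partial(c \times [0,1])$ combined with $f|_c$ and $f'|_c$ gives a continuous map from a $k$-sphere into a compact subset of $\rr^{n+1}$ lying in some ball $B(f(x_c), r)$ around a basepoint $f(x_c) \in f(c)$; uniform contractibility extends this over $c \times [0,1]$ within $B(f(x_c), \phi(r))$. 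A careful bookkeeping, using the bounded geometry of $\Cone_\infty(K)$ (cells of uniformly bounded diameter, since $K$ is finite) together with the standing Lipschitz-type control on $f$ ensuring that $f$ sends cells of $\Cone_\infty(K)$ to sets of uniformly bounded diameter, yields a single constant $R^*$ for which $H(c \times [0,1]) \subset N_{R^*}(f(c))$ for every cell $c$ of $\Cone_\infty(K)$. In particular $H(\Cone_\infty \sigma \times [0,1]) \subset N_{R^*}(f(\Cone_\infty \sigma))$ for every simplex $\sigma \subset K$, which is exactly $(\dagger)$.

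Properness of $H$ follows automatically: if $H(x_n, t_n)$ lies in a bounded set $B$, then $f(x_n) \in N_{R^*}(B)$, and since $N_{R^*}(B)$ is bounded and $f$ is proper, $\{x_n\}$ is precompact. The main obstacle in this argument is organizing the inductive extension so that the bounds $R_k$ at successive dimensions remain uniform in the cell; this depends on having comparable control of the diameter of $f$ on cells of $\Cone_\infty(K)$, which is where the regularity assumptions standing throughout the paper are used. Once the uniform bound is in hand, Theorem \ref{l:coarsewu3} directly gives $\CWu_f^n(K) = \CWu_{f'}^n(K)$.
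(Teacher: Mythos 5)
Your argument is essentially the paper's: the paper likewise uses uniform contractibility around the image of $f$ to produce a homotopy from $f$ to $f'$ moving points a uniformly bounded distance, and then concludes exactly as in Theorem \ref{l:coarsewu3} (choosing $T$ so that disjoint simplices of $K$ are eventually farther apart than twice the homotopy bound). The only difference is that you spell out the skeletal induction behind the existence of such a bounded homotopy --- which, as you honestly flag, requires uniform diameter control of $f$ on cells, a point the paper also passes over silently --- whereas the paper simply asserts that the homotopy exists.
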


\begin{proof}
Since $\rr^n$ is uniformly contractible around the image of $f$, we can homotope $f$ to $f'$ so that points move a uniformly bounded distance during the homotopy (say $< R$). As before, choose $T > 0$ so that $\gs \times [T, \infty)$ and $\gt \times [0, \infty)$ have distance $> 2R$ for each pair of disjoint simplices $\gs$ and $\gt$ in $K$. This guarantees that a homotopy exists between the Gauss maps for $f_t$ and $f_t'$ for large enough $t$, and hence the coarse Wu invariants are the same. 
\end{proof}

\section{Equivariant obstructor dimension}\label{s:eqobdim}
We now show that the coarse Wu invariant obstructs proper, expanding maps $\Cone_\infty(K) \rightarrow \rr^n$ that are ``coarsely equivariant", and hence obstructs properly discontinuous actions on $\rr^n$. Our notion of coarse equivariance is different from quasi-equivariance as defined in Section 2. For example, we want different groups acting on the domain and range (a finite group for the domain and usually a torsion-free group for the range), and we also want to allow proper homotopies that preserve disjointness of far away simplices. 

\begin{definition}
Let $H$ be a finite group, $K$ a finite $H$-complex, and extend the $H$-action to $\Cone_\infty(K)$ by acting trivially on $[0,\infty$. Let $G$ be a group and $EG$ a contractible, proper, cocompact, $G$-complex. A proper map $f: \Cone_\infty K \rightarrow EG$ is \emph{$H$-preserving} if for each $h \in H$, there is $g_h$ in $G$ and $R > 0$ so that 
\begin{itemize}
\item $f\circ h$ is homotopic via a proper homotopy $\{j^s\}_{s \in [0,1]}$ to $g_h \circ f$. 
\item For all $\sigma \in K$ and $s \in [0,1]$, $j^s(\Cone_\infty(\sigma))$ is contained in the $R$-neighbourhood $N_R(f \circ h(\Cone_\infty \sigma))$. 
\end{itemize}
\end{definition}

In particular, this implies that $g_h \circ f(\Cone_\infty \sigma)$ is in the $R$-neighborhood of  $f\circ h(\Cone_\infty \sigma)$. 
Of course, the element $g_h$ may not be unique. We can and will assume that for all $h \in S(H)$, the elements $g_h$ are in $S(G)$. 

\begin{definition}
The \emph{equivariant obstructor dimension} of $G$ is the maximal $n+3$ so that there is an $H$-equivariant $(n+1)$-obstructor $K$ and an $H$-preserving proper expanding map $f: \Cone_\infty K \rightarrow EG$. 
\end{definition}

From the definitions, we have the following lemma: 
\begin{lemma}
$\obdim(G) \le \eqobdim(G) \le \obdim(G) + 1$. 
\end{lemma}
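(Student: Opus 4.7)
The plan is to prove the two inequalities separately; both reduce to bookkeeping once the definitions are unpacked, with the only real content being a careful translation between $\zz^{-\geps}$-coefficients and $\zz/2$-coefficients.

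For the upper bound $\eqobdim(G)\le\obdim(G)+1$, I would first take a witness to $\eqobdim(G)\ge n+3$: an $H$-equivariant $(n+1)$-obstructor $K$ equipped with an $H$-preserving proper expanding map $f\colon\Cone_\infty K\to EG$. The defining evaluation cycle $\Phi\in H_n(\cac(K);\zz^{-\geps})$ reduces mod $2$ to a class $\Phi_{\zz/2}\in H_n(\cac(K);\zz/2)$. The first condition of an evaluation cycle, $\langle\vktwo^n(K),\Phi_{\zz/2}\rangle\ne 0$, is by definition the first condition of an $n$-obstructor. The second condition, $\sum_v a_{(\sigma,v)}=0$ in $\zz^{-\geps}$ for each $n$-cell $\sigma$, passes under mod $2$ reduction to the statement that the cardinality of vertex pairs $(\sigma,v)$ appearing in $\Phi_{\zz/2}$ is even, which is the second condition of an $n$-obstructor. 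Thus $K$ is an $n$-obstructor, and since $f$ is a fortiori a proper expanding map (forgetting the $H$-preserving property), $\obdim(G)\ge n+2=\eqobdim(G)-1$, as claimed.

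For the lower bound $\obdim(G)\le\eqobdim(G)$, I would take a witness of $\obdim(G)\ge n+2$, an $n$-obstructor $K$ with proper expanding $f\colon\Cone_\infty K\to EG$. Endowing $K$ with the trivial $H=\{1\}$-action, the map $f$ is vacuously $H$-preserving. Choosing any integral lift $\Phi\in H_n(\cac(K);\zz^{-\geps})$ of the $\zz/2$-obstructor cycle (available for the joins and cones of point sets used throughout the paper) and taking $A=\emptyset\subset S(H)$, the equivariance condition $\sum_{h\in A}h_*\Phi=0$ holds trivially and $K$ qualifies as an equivariant $(n+1)$-obstructor. Hence $\eqobdim(G)\ge n+3>\obdim(G)$.

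The main obstacle, and really the only point requiring care, is the coefficient translation in the upper bound: one must confirm that the integral vertex-parity condition in the definition of evaluation cycle reduces precisely to the mod-$2$ parity condition in the definition of an $n$-obstructor, and that the first condition is literally unchanged by the coefficient change because the obstructor condition is stated in terms of $\Phi_{\zz/2}$ from the outset. The lower bound is essentially a matter of packaging, in which the integral lift of the $\zz/2$ cycle is implicit in the running assumption that the obstructor cycles under consideration are those constructed by the Cone and Join Lemmas of Section 2.
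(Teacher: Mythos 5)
The paper offers no argument for this lemma beyond the phrase ``from the definitions,'' so I am judging your proof on its own terms. Your proof of the upper bound $\eqobdim(G)\le\obdim(G)+1$ is correct and is surely what the author intends: an equivariant $(n+1)$-obstructor carries an evaluation $n$-cycle $\Phi$, the first condition of an evaluation cycle is already stated in terms of $\Phi_{\zz/2}$, the integral vertex condition $\sum_v a_{(\sigma,v)}=0$ reduces mod $2$ to the parity condition of an $n$-obstructor, and an $H$-preserving proper expanding map is in particular a proper expanding map, so the same witness gives $\obdim(G)\ge n+2=\eqobdim(G)-1$.

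The lower bound is where there is a genuine gap --- in fact two. First, you take $A=\emptyset$. Under that reading the condition $\sum_{h\in A}h_*\Phi=0$ is vacuous, so ``equivariant $(n+1)$-obstructor'' degenerates to ``complex with an evaluation $n$-cycle,'' and the non-embedding lemma of Section~3 (whose punchline is $|A|\langle\Wu_f^n(K),\Phi\rangle\ne 0$) fails for such a witness; the definition must implicitly require $A\ne\emptyset$. But with $H$ trivial the only nonempty choice is $A=\{1\}$, which forces $\Phi=0$, so the trivial-action device does not produce an equivariant obstructor at all. Second, even granting $A=\emptyset$, you need an integral lift of the $\zz/2$ obstructor cycle to an evaluation cycle in $H_n(\cac(K);\zz^{-\geps})$; such lifts need not exist, which is exactly why the paper introduces $\obdim_\zz(G)$ as a separate, a priori smaller, invariant and asserts only that ``many'' obstructor complexes admit them. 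Your parenthetical restriction to joins and cones of point sets proves at best a statement about $\obdim_\zz$, not the claimed inequality for $\obdim$. A sanity check that something must be wrong: your argument, if valid, yields $\eqobdim(G)\ge n+3=\obdim(G)+1$, hence together with the upper bound $\eqobdim(G)=\obdim(G)+1$ identically; for $G=\zz^n$ this gives $\eqobdim(\zz^n)=n+1>n=\actdim(\zz^n)$, contradicting the paper's main theorem. So the first inequality cannot be obtained by your route; it would require manufacturing an honest equivariant $n$-obstructor (one dimension down, with nonempty $A$) from an $n$-obstructor witness, and neither your proposal nor the paper explains how to do that.
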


The following is our main theorem. 

\begin{Theorem}
$\eqobdim(G) \le \actdim(G)$. 
\end{Theorem}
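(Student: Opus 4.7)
The plan is proof by contradiction. Assume $\eqobdim(G) \ge n+3$, witnessed by an $H$-equivariant $(n+1)$-obstructor $K$, an evaluation cycle $\Phi \in H_n(\cac(K); \zz^{-\geps})$, a subset $A \subset S(H)$ with $\sum_{h \in A} h_* \Phi = 0$, and an $H$-preserving proper expanding map $f: \Cone_\infty K \to EG$; assume also, for contradiction, that $\actdim(G) \le n+2$, so $G$ acts properly discontinuously on a contractible $(n+2)$-manifold $M$.

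Using the extension lemma of Section 2, I choose a quasiequivariant, Lipschitz, uniformly proper $\rho: EG \to M$ and set $F := \rho \circ f: \Cone_\infty K \to M$. Composition with $\rho$ preserves properness and expansion (the latter since $\rho$ is uniformly proper), and tracking the Lipschitz constant of $\rho$ and the quasiequivariance constant through the $H$-preservation data for $f$ shows that $F$ is itself coarsely $H$-preserving: for each $h \in H$, $F \circ h$ is properly homotopic to $g_h \cdot F$ via a homotopy satisfying condition $(\dagger)$, with $g_h \in S(G)$ whenever $h \in S(H)$. Since any element of $S(G)$ is a product of squares of homeomorphisms, it acts on $M$ by an orientation-preserving homeomorphism.

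The technical heart of the argument is to extend the coarse Wu theory of Section 4 from $\rr^{n+2}$-targets to contractible topological $(n+2)$-manifold targets. Since $M$ is a contractible topological manifold, its tangent microbundle is trivial; fixing a trivialization produces, for every almost-embedding $g: \Cone K \to M$, a fiberwise Gauss map $\widetilde{G}_t: \widetilde \cac(\Cone K) \to \mathbb{S}^{n+1}$, and the analogues of Lemmas \ref{l:coarsewu1}--\ref{l:coarsewu4} and Theorem \ref{l:coarsewu3} carry over with only notational changes, using that orientation-preserving self-homeomorphisms of $M$ induce fiberwise degree-$1$ maps on the sphere bundle and hence are $\zz/2$-equivariantly homotopic to the identity on Gauss maps. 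This yields a well-defined class $\CWu^{n+1}_F(K) \in H^{n+1}(\cac(\Cone K); \zz^{-\geps})$. By the Cone Lemma, $\Phi$ extends to an evaluation cycle $\Phi' \in H_{n+1}(\cac(\Cone K); \zz^{-\geps})$ satisfying $\sum_{h \in A} h_* \Phi' = 0$, and reduction to $\zz/2$-coefficients together with Lemma \ref{l:nontrivial} shows $\langle \CWu^{n+1}_F(K), \Phi' \rangle \ne 0$.

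The contradiction now follows exactly as in Section 3. For every $h \in S(H)$, the manifold versions of Theorem \ref{l:coarsewu3} and Lemma \ref{l:coarsewu4} give $\CWu^{n+1}_{F \circ h} = \CWu^{n+1}_{g_h \cdot F} = \CWu^{n+1}_F$, hence $h^* \CWu^{n+1}_F = \CWu^{n+1}_F$, so
\[
0 = \Bigl\langle \CWu^{n+1}_F, \sum_{h \in A} h_* \Phi' \Bigr\rangle = \sum_{h \in A} \langle \CWu^{n+1}_F, h_* \Phi' \rangle = |A|\, \langle \CWu^{n+1}_F, \Phi' \rangle \ne 0,
\]
a contradiction. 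I expect the main obstacle to be the rigorous extension of the coarse Wu apparatus from Euclidean targets to contractible topological $(n+2)$-manifold targets: the trivialization of the tangent microbundle of $M$ must be handled consistently so the Gauss maps are well-defined up to $\zz/2$-equivariant homotopy, and one must verify that insensitivity under orientation-preserving self-homeomorphisms of $M$ (which in the Euclidean setting was Kirby's isotopy theorem) carries over in a form sufficient for Lemma \ref{l:coarsewu4}.
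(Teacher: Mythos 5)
Your overall architecture matches the paper's: compose the $H$-preserving expanding map $f:\Cone_\infty K\to EG$ with a quasiequivariant uniformly proper Lipschitz $\rho:EG\to M$, show the coarse Wu class of $\rho\circ f$ is invariant under precomposition by $h\in S(H)$ (via the chain $\rho\circ f\circ h\sim\rho\circ g_h\circ f\sim g_h\circ\rho\circ f\sim\rho\circ f$), and then evaluate against $\sum_{h\in A}h_*\Phi'=0$ to get $|A|\langle\CWu,\Phi'\rangle=0\ne 0$. That part is fine (though note that the middle comparison, $\rho\circ g_h\circ f$ versus $g_h\circ\rho\circ f$, is a bounded-distance argument needing uniform contractibility of $M$ around $\rho(G)$ --- the paper's Lemma \ref{bdddistance} --- not just condition $(\dagger)$; you should invoke it explicitly).

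The genuine gap is exactly where you flag the "technical heart": extending the coarse Wu apparatus to an arbitrary contractible topological $(n+2)$-manifold $M$ by trivializing the tangent microbundle. A microbundle trivialization only identifies directions for pairs of points \emph{near the diagonal}, whereas the Gauss map
$\widetilde F(x,y)=(f(x)-f(y))/\lVert f(x)-f(y)\rVert$
is a global construction: it must assign a point of $\mathbb{S}^{n+1}$ to pairs $f(x),f(y)$ lying in disjoint simplices, which are arbitrarily far apart. There is no fiberwise recipe for this, and the deleted product $M\times M-\Delta$ of a general open contractible manifold need not even be homotopy equivalent to $\mathbb{S}^{n+1}$, so there is no canonical target for a "Gauss map" at all. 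As written, the proposed extension does not produce a well-defined class $\CWu^{n+1}_F$. The paper avoids this entirely with a stabilization trick: assuming $M$ open, $M\times\rr$ is homeomorphic to $\rr^{n+3}$, the group $G\times\zz$ acts properly on it, and the Cone Lemma gives $\eqobdim(G\times\zz)=\eqobdim(G)+1$, so the Euclidean case (where the linear Gauss map exists) already yields the contradiction. Replacing your microbundle paragraph with this reduction closes the gap; everything else in your argument then goes through for $\rr^{n+2}$ (respectively $\rr^{n+3}$) targets using the paper's Lemmas \ref{l:coarsewu1}--\ref{l:coarsewu4}, Theorem \ref{l:coarsewu3}, and Lemma \ref{bdddistance} verbatim.
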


\begin{proof}
Suppose that $\eqobdim(G) = n+1$ and $G$ acts properly on a contractible $n$-manifold $M^n$. 
We first assume $M^n$ is homeomorphic to $\rr^{n}$. We equip $M^n$ with a proper $G$-invariant metric. 
By assumption, we get a quasi-equivariant map $\rho: EG \rightarrow M^n$, which is uniformly proper and Lipschitz. 
 By precomposing with the $H$-preserving map $f: \Cone_\infty K\rightarrow EG$, we get a proper expanding map $\rho \circ f: \Cone_\infty K \rightarrow M$, where $K$ is an equivariant $(n-1)$-obstructor. 
 
We now show that $\CWu_{\rho \circ f}^n(K) = \CWu^n_{\rho \circ f \circ h}(K)$ for all $h \in S(H)$. 
We have by Lemma \ref{l:coarsewu4} that for all $h \in S(H)$,  $$\CWu^n_{\rho \circ f}(K) = \CWu^n_{g_h \circ \rho \circ f}(K)$$ since $g_h \in S(G)$ and hence $g_h: M \rightarrow M$ is orientation preserving for all $h \in S(H)$, and hence isotopic to the identity. 
 
 We have that $\rho \circ f \circ h$ is properly homotopic to $\rho \circ g_h \circ f$ through the maps $\rho \circ j^s$. Since $j^s(\Cone_\infty(\gs))$ is contained in $N_R(f \circ h (\Cone_\infty(\gs))$ and $\rho$ is Lipschitz, we have that $\rho \circ j^s(\Cone_\infty(\gs))$ is contained in $N_{R'}(\rho \circ g_h \circ f(\Cone_\infty(\gs))$. Therefore, by Lemma \ref{l:coarsewu3}, $\CWu^n_{\rho \circ f \circ h} = \CWu^n_{\rho \circ g_h \circ f}$.
 
 Since $\rho$ is quasiequivariant, we have that $\rho \circ g_h \circ f$ is uniformly bounded distance from $g_h \circ \rho \circ f$. Since $M^n$ is uniformly contractible around the image of $\rho(G)$, Lemma \ref{bdddistance} implies that $\CWu_{\rho \circ g_h \circ f} = \Wu_{g_h \circ \rho \circ f}$.

So, therefore $$ \CWu^n_{\rho \circ f \circ h}(K) = \CWu^n_{g_h \circ \rho \circ f}(K) = \CWu^n_{\rho \circ f}(K)$$ for all $h \in S(H)$. 
 
 Now, by the Cone Lemma, we have an evaluation cycle $\Phi \in H_{n}(\Cone K; \zz^{-\geps})$ which $\CWu^n_{\rho \circ f}(K)$ evaluates nontrivially on. 
 
As in the non-coarse case, this implies that $$0 = \langle \CWu_{\rho \circ f}^{n}(K), (\sum_{h \in A} h_\ast\Phi) \rangle = \sum_{h \in A} \langle \CWu^n_{\rho \circ f}(K), h_\ast\Phi\rangle $$ $$= \sum_{h \in A}\ \langle \CWu_{\rho \circ f \circ h}^{n}(K),\Phi \rangle = |A|\langle\CWu^{n}_{\rho \circ f}(K),\Phi\rangle \ne 0$$ which is a contradiction. 

For contractible manifolds not homeomorphic to $\rr^n$, we can do a stabilization trick. We assume without loss of generality that $M^n$ is open. If $G$ acts properly on $M^n$, then $\zz \times G$ acts properly on $M^n \times \rr$, which is homeomorphic to $\rr^{n+1}$. Since $\eqobdim(G \times \zz) = \eqobdim(G) + 1$ by the Cone Lemma, this is a contradiction by the above.  
\end{proof}

\begin{lemma}
If $K_1$ and $K_2$ are $H_i$-complexes and $f_i: \Cone_\infty K_i \rightarrow EG_i$ are $H_i$-preserving, then the product map $$f_1 \times f_2: \Cone_\infty(K_1 \ast K_2) = \Cone_\infty K_1 \times \Cone_\infty K_2 \rightarrow EG_1 \times EG_2$$ is $H_1 \times H_2$-preserving. \end{lemma}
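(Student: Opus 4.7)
The plan is to verify the two conditions in the definition of $H$-preserving directly from the product structure, choosing the element of $G_1 \times G_2$ and the homotopy coordinate-wise. Given $(h_1, h_2) \in H_1 \times H_2$, the hypothesis that each $f_i$ is $H_i$-preserving gives elements $g_{h_i} \in G_i$, a radius $R_i > 0$, and a proper homotopy $\{j^s_i\}_{s \in [0,1]}$ from $f_i \circ h_i$ to $g_{h_i} \circ f_i$ with $j^s_i(\Cone_\infty \sigma_i) \subset N_{R_i}(f_i \circ h_i(\Cone_\infty \sigma_i))$ for every simplex $\sigma_i \subset K_i$. The natural candidate is $g_{(h_1, h_2)} := (g_{h_1}, g_{h_2})$, together with the product homotopy $J^s := j^s_1 \times j^s_2$, which joins $(f_1 \times f_2) \circ (h_1, h_2)$ to $(g_{h_1}, g_{h_2}) \circ (f_1 \times f_2)$. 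Properness of $J^s$ follows from properness of each $j^s_i$ since a product of proper maps between proper metric spaces is proper.

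For the neighbourhood condition, the main point is to unwind the identification $\Cone_\infty(K_1 \ast K_2) = \Cone_\infty K_1 \times \Cone_\infty K_2$: under this identification, a simplex $\sigma_1 \ast \sigma_2$ of the join corresponds to the product $\Cone_\infty(\sigma_1) \times \Cone_\infty(\sigma_2)$ (with the appropriate conventions when $\sigma_i = \emptyset$). Then
\[
J^s(\Cone_\infty(\sigma_1) \times \Cone_\infty(\sigma_2)) = j^s_1(\Cone_\infty \sigma_1) \times j^s_2(\Cone_\infty \sigma_2) \subset N_{R_1}(f_1 h_1(\Cone_\infty \sigma_1)) \times N_{R_2}(f_2 h_2(\Cone_\infty \sigma_2)),
\]
and for the product metric on $EG_1 \times EG_2$ this last set is contained in $N_R((f_1 \times f_2) \circ (h_1, h_2)(\Cone_\infty(\sigma_1 \ast \sigma_2)))$ for $R = R_1 + R_2$. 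This gives both required properties.

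Finally, I would record the squares condition used in the proof of the main theorem: $S(H_1 \times H_2) = S(H_1) \times S(H_2)$, so for $(h_1, h_2) \in S(H_1 \times H_2)$ we may assume each $g_{h_i}$ lies in $S(G_i)$, whence $(g_{h_1}, g_{h_2}) \in S(G_1 \times G_2)$. There is no real obstacle here; the only point requiring a moment of care is the identification of simplices of $K_1 \ast K_2$ with products of simplices in $\Cone_\infty K_1 \times \Cone_\infty K_2$, which is why the neighbourhood condition at the level of individual simplices combines cleanly under products.
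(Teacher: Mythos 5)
Your proposal is correct and follows essentially the same route as the paper: take the coordinate-wise element $(g_{h_1},g_{h_2})$ and the product of the two proper homotopies, then check the neighbourhood condition on simplices of the join via the identification of $\Cone_\infty(\sigma_1\ast\sigma_2)$ with $\Cone_\infty(\sigma_1)\times\Cone_\infty(\sigma_2)$. Your additional remarks on properness of the product homotopy, the constant $R=R_1+R_2$, and $S(H_1\times H_2)=S(H_1)\times S(H_2)$ are all consistent with how the lemma is used later.
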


\begin{proof}
Let $(h_1, h_2) \in H$. By assumption, there are elements $g_{h_1}$ and $g_{h_2}$ and proper homotopies connecting $f_i \circ h_i$ to $g_{h_1} \circ f_i$. The product of these homotopies gives a homotopy between $f_1 \times f_2 \circ (h_1, h_2)$ and $(g_{h_1}, g_{h_2}) \circ f_1 \times f_2$. Since for each simplex $\gs_i \in K_i$, the image under the homotopy of $\Cone_\infty(\gs)$ is contained in the $R$-neighbourhood of $\Cone_\infty(\gs)$, the same holds true for the image of $\Cone_\infty(\gs_1) \times \Cone_\infty(\gs_2)$ under the product homotopy. 
\end{proof}

Therefore, the join lemma for equivariant obstructors immediately gives the following product formula for $\eqobdim$. 
\begin{Theorem}
$\eqobdim(G_1 \times G_2) \ge \eqobdim(G_1) + \eqobdim(G_2) - 1$
\end{Theorem}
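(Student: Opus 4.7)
The plan is to combine the Join Lemma from Section~\ref{s:equivariant} with the product-map lemma immediately preceding the theorem. The argument will be almost formal once the index bookkeeping and the standard join identifications are in place.

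First, I will unpack the hypotheses: write $\eqobdim(G_i) = n_i + 3$, so there exist a finite group $H_i$, a finite $H_i$-complex $K_i$ which is an equivariant $(n_i+1)$-obstructor, and an $H_i$-preserving proper expanding map $f_i : \Cone_\infty K_i \to EG_i$. The goal will be to exhibit data witnessing $\eqobdim(G_1 \times G_2) \ge n_1 + n_2 + 5 = \eqobdim(G_1) + \eqobdim(G_2) - 1$.

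The candidate complex is $K_1 \ast K_2$, equipped with the $H_1$-action that permutes $K_1$ and fixes $K_2$. Since $K_2$ is an equivariant $(n_2+1)$-obstructor it in particular carries an evaluation $n_2$-cycle, so the Join Lemma produces an evaluation $(n_1 + n_2 + 2)$-cycle $\Phi$ on $K_1 \ast K_2$ with $\sum_{h \in A_1} h_\ast \Phi = 0$, where $A_1 \subset S(H_1)$ is the subset witnessing the equivariant obstructor property for $K_1$. The verification that $A_1$ still works after the join is immediate from the Join Lemma's formula, since the join cycle is $H_1$-equivariant in the $K_1$-factor and fixed on the $K_2$-factor; hence $K_1 \ast K_2$ is an $H_1$-equivariant $(n_1 + n_2 + 3)$-obstructor.

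For the map, I will use the identification $\Cone_\infty(K_1 \ast K_2) = \Cone_\infty K_1 \times \Cone_\infty K_2$ together with the fact that $EG_1 \times EG_2$ is a model for $E(G_1 \times G_2)$. The product $f_1 \times f_2$ is proper factorwise and expanding (disjoint simplices in $K_1 \ast K_2$ project to disjoint simplices in at least one factor $K_i$, so the expanding property of the corresponding $f_i$ forces the distance between the rays to diverge in $EG_1 \times EG_2$). The preceding lemma then supplies that $f_1 \times f_2$ is $H_1 \times H_2$-preserving, which restricts to $H_1$-preserving with the $H_2$-action trivialized. Substituting these ingredients into the definition of $\eqobdim$ yields the claimed inequality. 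I expect no serious obstacle: the entire argument is a formal combination of the Join Lemma and the product-map lemma, with the only non-trivial check being the transfer of the subset $A_1$ through the join construction, which the proof of the Join Lemma already implicitly handles.
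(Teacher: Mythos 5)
Your proposal is correct and follows exactly the paper's route: the paper proves this theorem by combining the Join Lemma (applied to $K_1 \ast K_2$ with the $H_1$-action on the first factor, using that $K_2$ carries an evaluation $n_2$-cycle) with the immediately preceding lemma that $f_1 \times f_2$ is $H_1\times H_2$-preserving on $\Cone_\infty(K_1 \ast K_2) = \Cone_\infty K_1 \times \Cone_\infty K_2$. Your index bookkeeping ($n_1+n_2+5 = \eqobdim(G_1)+\eqobdim(G_2)-1$) and the check that the product map is proper and expanding are the details the paper leaves implicit, and they are right.
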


Since the Join Lemma only requires one of the complexes to have a group action, we can also say something about $\eqobdim(G_1 \times G_2)$ when we know $\eqobdim(G_1)$ and $\obdim_{\zz}(G_2)$.

\begin{lemma}
$\eqobdim(G_1 \times G_2) \ge \eqobdim(G_1) + \obdim_\zz(G_2)$
\end{lemma}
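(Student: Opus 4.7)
The plan is to combine the Join Lemma for equivariant obstructors (whose asymmetric hypothesis requires only one factor to carry a group action) with the preceding product lemma for $H$-preserving maps, in exact parallel to the proof of the previous product formula for $\eqobdim$. Unpacking the hypotheses, write $\eqobdim(G_1) = n_1 + 3$, so there is an $H$-equivariant $(n_1 + 1)$-obstructor $K_1$ and an $H$-preserving proper expanding map $f_1 \colon \Cone_\infty K_1 \to EG_1$; and write $\obdim_\zz(G_2) = n_2 + 2$, so there is a complex $K_2$ carrying an evaluation $n_2$-cycle together with a proper expanding map $f_2 \colon \Cone_\infty K_2 \to EG_2$. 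Let $H$ act on $K_1 \ast K_2$ by permuting the $K_1$-factor and fixing $K_2$.

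First I would apply the Join Lemma to conclude that $K_1 \ast K_2$ is an equivariant $(n_1 + n_2 + 3)$-obstructor. Next I would invoke the preceding product lemma with $H_1 = H$ and $H_2$ the trivial group (any map is vacuously $\{e\}$-preserving) to show that $f_1 \times f_2$ defines an $H$-preserving map
\[
\Cone_\infty(K_1 \ast K_2) = \Cone_\infty K_1 \times \Cone_\infty K_2 \longrightarrow EG_1 \times EG_2 = E(G_1 \times G_2).
\]
Properness of the product map is immediate, and the expanding property follows because a pair of disjoint simplices in $K_1 \ast K_2$ pulls back to a pair of disjoint simplices in at least one of the factors, where the expansion of $f_i$ already applies. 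Feeding this data into the definition of $\eqobdim$ yields
\[
\eqobdim(G_1 \times G_2) \ge (n_1 + n_2 + 2) + 3 = \eqobdim(G_1) + \obdim_\zz(G_2),
\]
as required.

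There is essentially no hard step here: the Join Lemma was packaged asymmetrically in anticipation of precisely this situation, and the product lemma was stated to accommodate arbitrary (possibly trivial) group actions on each factor. The only point that needs a sentence is the trivial-group case of the product lemma, which is vacuous but has to be invoked explicitly so that the output is recorded as an $H$-preserving map rather than an $H \times \{e\}$-preserving one.
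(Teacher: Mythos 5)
Your proposal is correct and is exactly the argument the paper intends: the lemma is stated there without proof, immediately after the remark that the Join Lemma only requires one factor to carry a group action, and your combination of the Join Lemma with the product lemma for $H$-preserving maps (taking the second group trivial) together with the dimension count $(n_1+n_2+2)+3=(n_1+3)+(n_2+2)$ is the intended route. No gaps.
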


If $G$ acts properly and cocompactly on a CAT(0) space $X$, then embedded obstructor complexes $K$ into the boundary $\partial_\infty X$ give proper expanding maps of $\Cone_\infty(K)$ into $X$, and hence give lower bounds for obstructor dimension. Similarly, if $K$ is an obstructor complex in $\partial_\infty X$ which is invariant setwise under the $G$-action on $\partial_\infty(X)$, then this should give lower bounds for equivariant obstructor dimension.

\begin{lemma}\label{l:boundary}
Suppose $G$ acts properly and cocompactly on a CAT(0) space $EG$, and let $\partial_\infty EG$ be the visual boundary for $EG$. Suppose $K$ is an $H$-equivariant $n$-obstructor, and $i: K \rightarrow \partial_\infty(G)$ is an embedding. Suppose that for all $h \in H$, there is $g_h \in G$ so that $i \circ h(K)$ is homotopic to $g_h \circ i(K)$ and the image of each simplex in $K$ is stable under the homotopy. Then $\eqobdim(G) \ge n+3$. 
\end{lemma}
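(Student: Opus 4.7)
The plan is to build, from the boundary embedding $i$, a proper expanding $H$-preserving map $f: \Cone_\infty K \to EG$ by coning off $i(K)$ from a basepoint. Once such an $f$ exists, the conclusion $\eqobdim(G)\ge n+3$ is immediate from the definition in Section \ref{s:eqobdim}.

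Fix a basepoint $x_0\in EG$. For each vertex $v$ of $K$, let $\gamma_v : [0,\infty)\to EG$ be the unique CAT(0) geodesic ray from $x_0$ asymptotic to $i(v)\in\partial_\infty EG$, and extend $f$ to higher-dimensional simplices by iterated CAT(0) convex combinations in barycentric coordinates, exactly as in the standard construction of proper expanding maps from embedded boundary obstructors in \cite{bkk}. Then $f$ is proper since $d(x_0, f(x,t))$ grows linearly in $t$, and expansion follows since $i$ is an embedding of a finite complex: disjoint simplices of $K$ have disjoint compact images in $\partial_\infty EG$, whose $x_0$-cones diverge in $EG$ by the CAT(0) geometry used in \cite{bkk}.

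For the $H$-preserving property, fix $h\in H$ with associated $g_h\in G$ and hypothesized boundary homotopy $\phi: K\times [0,1]\to\partial_\infty EG$ from $i\circ h$ to $g_h\circ i$ whose restriction to each simplex has stable image. For each $s$, let $f^s: \Cone_\infty K\to EG$ be the cone extension of $\phi_s$ from $x_0$, built by the same recipe with $\phi_s$ in place of $i$. Then $f^0=f\circ h$ and $\{f^s\}$ is a proper homotopy. The endpoint $f^1$ sends $(v,t)$ to a point on the geodesic from $x_0$ to $g_h\,i(v)$, whereas $g_h\circ f(v,t)$ lies on the geodesic from $g_h x_0$ to the same boundary point; two CAT(0) geodesic rays asymptotic to the same boundary point stay at distance at most the distance between their starting points, and CAT(0) barycenters propagate this bound, giving $d(f^1(x,t), g_h\circ f(x,t))\le d(x_0, g_h x_0)$ uniformly in $(x,t)$. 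Uniform contractibility of $EG$ then yields a proper homotopy from $f^1$ to $g_h\circ f$ of uniformly bounded displacement. Concatenating with $\{f^s\}$ gives a proper homotopy $\{j^s\}$ from $f\circ h$ to $g_h\circ f$. For the bounded-neighborhood condition, fix a simplex $\gs\subset K$: during the first half, $j^s(\Cone_\infty \gs)$ lies in the $x_0$-cone over $\phi_s(\gs)$, which by the stability hypothesis stays in a fixed Hausdorff neighborhood of the $x_0$-cone over $i\circ h(\gs)=f\circ h(\Cone_\infty \gs)$; during the second half, displacements are bounded by $d(x_0, g_h x_0)$. Maximizing the resulting bound $R(h)$ over the finite group $H$ gives the constant $R$ required by the definition.

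The main obstacle is the bounded-neighborhood step: converting ``the image of each simplex is stable under the homotopy'' from a purely boundary-level statement into an honest Hausdorff bound on $x_0$-cones in the CAT(0) interior of $EG$. This is plausible from the CAT(0) fact that two asymptotic geodesic rays stay at distance bounded by the distance between their starting points, but making it uniform in the homotopy parameter $s$ requires care; framing the stability hypothesis simplex-by-simplex, rather than globally on all of $K$, is exactly what allows one to obtain a uniform $R$ in the end by finiteness of both $H$ and the simplex set of $K$.
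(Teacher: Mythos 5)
Your proposal is correct and follows essentially the same route as the paper: cone $i(K)$ off a basepoint $x_0$ to get a proper expanding map, use the hypothesized boundary homotopy to homotope $f\circ h$ to the $x_0$-cone on $g_h\circ i(K)$, observe that this differs from $g_h\circ f$ only by moving the cone point from $x_0$ to $g_h x_0$ (so the two maps stay within distance $d(x_0,g_h x_0)$ by the CAT(0) asymptotic-rays estimate), and finish with uniform contractibility to get a bounded-displacement proper homotopy. The paper's own proof is terser but makes exactly these moves, including the simplex-stability hypothesis being what guarantees the $R$-neighborhood condition in the definition of $H$-preserving.
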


\begin{proof}
Choose a basepoint $x_0 \in EG$ and define $f: \Cone_\infty(K) \rightarrow EG$ by coning $i(K)$ to $x_0$. We claim $f$ is $H$-preserving. By assumption, the maps $f \circ h(\Cone_\infty(hK))$ and $g_h \circ f(\Cone_\infty K))$ have homotopic boundary values. Use this homotopy to homotope $f$ to $f':\Cone_\infty(hK) \rightarrow \rr^n$ which has the same value on the boundary as $g_h \circ F$. This homotopy is proper and by assumption preserves the subspaces $\Cone_\infty(\sigma)$. 

Therefore, $g_h \circ f$ sends $(x,t)$ to a geodesic based at $g_h(x_0)$, and $f'$ sends $(x,t)$ to the asymptotic geodesic based at $x_0$. Since $EG$ is CAT(0), the distance between $g_h \circ F(x,t)$ and $F'(x,t)$ is uniformly bounded by the distance between $g_h(x_0)$ and $x_0$. Since $EG$ is uniformly contractible, we can homotope $g_h \circ f$ to $f'$ and move points a uniformly bounded distance. 
\end{proof}

\section{Examples of Groups}\label{s:examples}

\subsection{Virtual RAAG's}

The simplest examples of groups with $\updim(G) < \actdim(G)$ are virtually free groups that do not act on the plane. We will compute the equivariant obstructor dimension of a more general class of groups which are finite extensions of right-angled Artin groups. We recall the definition.

\begin{definition}
Suppose $L^1$ is a simplicial graph with vertex set $V$.
The \emph{flag complex determined by} $L^1$ is the simplicial complex $L$ whose simplices are the (vertex sets of) complete subgraphs of $L^1$.
Associated to $L^1$ there is a RAAG, $A_L$. 
A set of generators for $A_L$ is $\{g_v\}_{v\in V}$; there are relations $[g_v, g_{v'}] = 1$ whenever $\{v,v'\} \in \edge L^1$.
\end{definition}

Let $T^V$ denote the product $(S^1)^V$.
For each simplex $\gs\in L$, let $T(\gs)$ denote the subtorus $(S^1)^{|\gs|}$. 
The \emph{Salvetti complex} for $A_L$ is the union of the subtori $T(\gs)$ over simplices $\gs$ in $L$:
\[
	S(L):= \bigcup_{\gs\in L} T(\gs).
\]

If a finite group $H$ acts on a flag complex $L$, then $H$ acts on $A_L$ by permuting the generators of $A_L$. Therefore, we can form the semidirect product $A_L \rtimes H$. Suppose now that $L$ is a flag $H$-complex which is an equivariant $(n+1)$-obstructor. 
\begin{Theorem}
If $L$ is a $d$-dimensional flag-complex which is an equivariant $(2d+1)$-obstructor, then $$\eqobdim(A_L) \rtimes H \ge 2d+3.$$
\end{Theorem}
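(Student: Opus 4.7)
The plan is to apply the boundary criterion, Lemma~\ref{l:boundary}, to a natural CAT(0) cubical model of $E(A_L \rtimes H)$. Take $EG = \wt{S}(L)$, the universal cover of the Salvetti complex $S(L)$, equipped with its standard cubical CAT(0) metric. The group $A_L$ acts properly and cocompactly by deck transformations; the $H$-action on $L$ permutes the generators of $A_L$ and lifts to a cubical action on $\wt{S}(L)$ by permuting cube directions. Together these yield a proper cocompact action of $A_L \rtimes H$, so $\wt{S}(L)$ serves as $E(A_L \rtimes H)$. Pick $x_0 \in \wt{S}(L)$ to be the preimage of the unique $0$-cell of $S(L)$ corresponding to $e \in A_L$; this vertex is $H$-fixed since $H$ permutes coordinate axes through $x_0$.

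I would then construct an embedding $i \colon L \to \partial_\infty \wt{S}(L)$ as follows. For a vertex $v \in L^{(0)}$, set $i(v)$ to be the endpoint at infinity of the geodesic ray $t \mapsto g_v^t \cdot x_0$. For a simplex $\gs = \{v_0, \dots, v_k\} \in L$, the generators $g_{v_0}, \dots, g_{v_k}$ pairwise commute and span a standard flat $F(\gs) \cong \rr^{k+1}$ through $x_0$, so the points $i(v_j)$ sit on the boundary sphere $\partial_\infty F(\gs) \cong \sphere^{k}$ as positive-axis endpoints. Extend $i$ over $\gs$ linearly by mapping it to the round spherical simplex spanned by these points. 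Disjoint simplices of $L$ have disjoint vertex sets, and since $\wt{S}(L)$ is CAT(0) cubical with orthogonal axial directions for distinct generators, the images span disjoint spherical simplices in $\partial_\infty \wt{S}(L)$; hence $i$ is an honest embedding.

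The map $i$ is literally $H$-equivariant: in $A_L \rtimes H$ one has $h g_v h^{-1} = g_{hv}$, and combined with $h \cdot x_0 = x_0$ this yields $h \cdot i(v) = i(hv)$, and the equivariance extends to all simplices. In the notation of Lemma~\ref{l:boundary}, take $g_h = h \in A_L \rtimes H$; then $i \circ h(L) = g_h \circ i(L)$ holds on the nose, so the required homotopy is constant and every simplex is preserved pointwise. Since $L$ is by hypothesis an $H$-equivariant $(2d+1)$-obstructor, with evaluation cycle $\Phi \in H_{2d}(\cac(L); \zz^{-\geps})$, Lemma~\ref{l:boundary} then yields $\eqobdim(A_L \rtimes H) \geq 2d+3$.

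The main technical point I expect to have to verify is the genuine disjointness of $i(\gs)$ and $i(\gt)$ in $\partial_\infty \wt{S}(L)$ when $\gs \cap \gt = \emptyset$ in $L$. This reduces to the standard Davis--Januszkiewicz description of $\wt{S}(L)$: distinct generators $g_v$ contribute combinatorially independent axial directions at $x_0$, so their boundary endpoints are pairwise distinct, and the spherical simplices spanned by disjoint vertex sets are disjoint inside the (possibly larger) standard flat that contains them. Once this geometric disjointness is in hand, the rest of the argument is a direct application of Lemma~\ref{l:boundary}.
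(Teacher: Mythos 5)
Your proposal is correct and follows essentially the same route as the paper: the paper likewise realizes $L$ inside $\partial_\infty \widetilde S(L)$ as the union of the boundaries of the positive orthants $\rr_\gs^+$ of the standard flats through a fixed lift of the base vertex, observes that the $H$-action stabilizes this copy of $L$ and restricts to the original $H$-action on it, and then invokes Lemma \ref{l:boundary}. Your description via the rays $t \mapsto g_v^t\cdot x_0$ and the spherical simplices they span is just an explicit rendering of that same embedding, with the added (correct) remark that the equivariance is exact so the homotopies in Lemma \ref{l:boundary} are constant.
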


\begin{proof}
Fix a point $\ast$ in the universal cover $\widetilde S(L)$ which is a lift of the unique vertex of $S(L)$. Inside $\widetilde S(L)$, there is a unique lift $\rr_\sigma$ of $T_\gs$ containing $\ast$. For each $T_\gs$, let $\rr_\gs^+$ be the points with nonnegative coordinates. Then the union of the boundaries of the $\rr^+_\gs$ is homeomorphic to $L$. Furthermore, the action of $H$ on $\widetilde S(L)$ permutes these lifts, and stabilizes this copy of $L$ in $\partial_\infty \widetilde S(L)$. Furthermore, the restriction of the action to this copy is precisely the original $H$-action. Since $L$ is an equivariant $n+1$-obstructor, we conclude from Lemma \ref{l:boundary} that $\eqobdim(A_L) \rtimes H = n+3$.
\end{proof}

\begin{remark}
If $L$ is a $d$-dimensional complex, then $\actdim(A_L) = 2d+2 = 2\gdim(A_L)$ \cite{ados}. Since equivariant $(n+1)$-obstructors have $H_n(L; \zz/2) \ne 0$, we are making quite a strong assumption on $L$ (for example, in \cite{ados} $L$ could be a triangulation of an $n$-sphere whereas we require $L$ to be more like a $n$-fold join of $m$ points). 
\end{remark}

\subsection{Products of virtually $3$-manifold groups}
We recall the examples of virtually $3$-manifold groups constructed in \cite{hst} (the examples in \cite{kk} have similar proofs, which we explain in the next subsection). We start with two closed surfaces $S_a$ and $S_b$ of genus $\ge 2$, and a choice of essential simple closed curves $\gamma_a$ and $\gamma_b$ on $S_a$ and $S_b$ respectively. We build a $2$-complex $X_{mn}$ by attaching an annulus to $S_a \sqcup S_b$. We glue one boundary component of the annulus to $\gamma_a$ by a degree $m$-map, and the other boundary component to $\gamma_b$ along a degree $n$-map, see Figure \ref{f:hst}. Let $G_{mn} = \pi_1(X_{mn})$. Hruska, Stark, and Tran show the following:

\begin{Theorem}[Theorem 5.6, \cite{hst}]
For all $m$ and $n$, $G_{mn}$ is virtually a $3$-manifold group. It is a $3$-manifold group if and only if 
\begin{itemize}
\item m  = n = 1
\item m = 1, n = 2 and $\gamma_b$ is non-separating. 
\item m = 2, n = 2, and $\gamma_a$ and $\gamma_b$ are non-separating.
\end{itemize}
\end{Theorem}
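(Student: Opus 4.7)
The plan has three parts: first the easy direction (virtually), then the construction of the three explicit $3$-manifolds, then the harder nonexistence direction.

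For the virtually $3$-manifold statement, the strategy is to find a finite-sheeted cover of $X_{mn}$ which is homotopy equivalent to a $3$-manifold. Since closed surface groups are LERF, one can choose finite regular covers $\widetilde S_a\to S_a$ and $\widetilde S_b\to S_b$ in which the preimages of $\gamma_a$ and $\gamma_b$ are disjoint unions of simple closed curves, each mapping homeomorphically to its image. The corresponding cover $\widetilde X_{mn}$ then replaces the single annulus by a finite collection of annuli, each attached by degree-$1$ maps on both ends. This cover thickens to a graph manifold: take $\widetilde S_a\times[0,1]$ and $\widetilde S_b\times[0,1]$ and glue along annular neighborhoods of the lifted curves. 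Its fundamental group is a finite-index subgroup of $G_{mn}$, so $G_{mn}$ is virtually a $3$-manifold group.

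For the three cases in which $G_{mn}$ is itself a $3$-manifold group, I would give direct geometric models. When $m=n=1$ the complex $X_{mn}$ is already homotopy equivalent to the graph manifold $S_a\times[0,1]\cup_{A} S_b\times[0,1]$, where $A$ is an annular neighborhood of $\gamma_a \sim \gamma_b$. In the remaining two cases the degree-$2$ attachment is realized via a tubular piece that is the mapping torus of a free $\zz/2$-action on a surface: if $\gamma_b$ is non-separating, there is an embedded Klein bottle in a $3$-manifold whose $\pi_1$ realizes the required $\langle x,y\mid xy^2x^{-1}=y\rangle$-type amalgamation, and $S_b$ glues onto the orientation double cover of a regular neighborhood of it. The case $m=n=2$ with both curves non-separating is built from two such pieces joined by an annulus.

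The hardest part, which I expect to be the main obstacle, is the converse: ruling out $3$-manifold group structures on $G_{mn}$ in all other cases. Suppose $G_{mn}\cong \pi_1(M^3)$ for a $3$-manifold $M$. The annulus in $X_{mn}$ exhibits a nontrivial $\zz$-splitting of $G_{mn}$ as an amalgam $\pi_1(S_a)\ast_\zz \pi_1(S_b)$ with edge generator $c$ satisfying $c=[\gamma_a]^m=[\gamma_b]^n$ up to conjugation. By Scott--Wall and the characteristic submanifold theory for $3$-manifolds (or equivalently by Dunwoody--Swenson type JSJ arguments), such a splitting is induced by an essential embedded annulus, torus, Klein bottle, or Möbius band in $M$. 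The surface subgroups $\pi_1(S_a),\pi_1(S_b)$ must then sit as the fundamental groups of the complementary pieces, which are themselves Seifert fibered or hyperbolic $3$-manifolds whose boundaries contain the chosen surface in $\partial M$. A careful case analysis of how $[\gamma_a]$ and $[\gamma_b]$ can represent powers of the boundary slope of a Seifert piece, combined with orientability considerations for the total space, constrains $(m,n)$ to the three listed possibilities: degree $1$ attachments are always allowed, a degree $2$ attachment is allowed precisely when the corresponding curve is non-separating (so a twisted $I$-bundle neighborhood fits), and no higher degree can occur because it would force a non-cyclic local fundamental group at the annulus. The technical obstacle here is to rule out all separating-curve degree-$2$ cases and all degrees $\geq 3$ simultaneously; this requires combining the algebraic constraint that the edge element is a proper power on both sides with the topological constraint that essential annuli and Klein bottles in irreducible $3$-manifolds have well-understood boundary slopes.
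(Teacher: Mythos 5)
First, a caveat about the comparison: this statement is Theorem 5.6 of Hruska--Stark--Tran and is quoted in the paper with attribution; the paper supplies no proof of its own, only the remark that HST obtain the virtual statement by (virtually) embedding $G_{mn}$ into a right-angled Coxeter group whose Davis complex thickens to a $3$-manifold, and that the non-realizability direction in \cite{kk} and \cite{hst} comes from analyzing patterns of codimension-one hypersurfaces in $EG$ and applying coarse Jordan separation. Your proposal is therefore a genuinely different route on both ends: LERF-produced finite covers plus a graph-manifold thickening for the virtual statement (reasonable, and essentially standard once you arrange, using separability of $\langle\gamma_a\rangle$ and $\langle\gamma_b\rangle$, a common regular cover in which every elevation of each attaching circle maps with degree exactly $m$, resp.\ $n$, onto the corresponding elevation of $\gamma_a$, resp.\ $\gamma_b$ --- matching the degrees on the two sides of the annulus is the one point you should not wave at), and classical characteristic-submanifold/JSJ theory rather than coarse topology for the negative direction. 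The classical route, if executed, proves a weaker conclusion than HST's (they obstruct proper actions on contractible $3$-manifolds, not merely $3$-manifold group structures), but it is a legitimate way to prove the theorem as stated.

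Second, the gaps. (a) In the negative direction you assert that the complementary pieces of the essential annulus realizing the splitting ``must'' have fundamental groups $\pi_1(S_a)$ and $\pi_1(S_b)$. That is not automatic: an embedded essential annulus induces \emph{some} splitting of $\pi_1(M)$, and identifying it with the given algebraic splitting $\pi_1(S_a)\ast_{\langle c\rangle}\pi_1(S_b)$ requires the compatibility theory relating algebraic and topological JSJ decompositions (Neumann--Swarup, Scott--Swarup); moreover the topological decomposition may refine the given splitting by cutting $S_a$ along further curves, since the pair $(\pi_1(S_a),\langle\gamma_a^m\rangle)$ is not rigid. You must rule this out or argue through it. (b) The mechanism that actually produces the stated answer is that a compact irreducible $3$-manifold with closed-surface fundamental group $\pi_1(S_b)$ is an $I$-bundle over $S_b$: the product bundle has boundary $S_b\sqcup S_b$ and every essential simple closed curve there is primitive (forcing $n=1$), while a twisted $I$-bundle classified by $\alpha\in H^1(S_b;\zz/2)$ has connected boundary double-covering $S_b$, and a simple closed curve there lying over $\gamma_b$ represents $\gamma_b^2$ exactly when $\alpha(\gamma_b)\ne 0$ --- possible iff $\gamma_b$ is nonzero in $H_1(S_b;\zz/2)$, i.e.\ non-separating; and no $I$-bundle boundary curve represents $\gamma_b^n$ for $n\ge 3$. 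Your sketch replaces this with Klein bottles and ``mapping tori of free $\zz/2$-actions,'' which is not the right model ($S_b$ is closed and orientable; the relevant piece is a non-orientable twisted $I$-bundle over $S_b$ itself), and your stated reason that degree $\ge 3$ ``would force a non-cyclic local fundamental group'' is not an argument. As written, neither the constructions in the two degree-$2$ cases nor the exclusion of the remaining cases is established.
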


In fact, they show $G_{mn}$ virtually embeds as a subgroup of a right-angled Coxeter group $W$ with planar boundary. The Davis complex of $W$ can be $W$-equivariantly thickened to a $3$-manifold. 
\begin{figure}\label{f:hst}
\centering
\includegraphics[scale = .7]{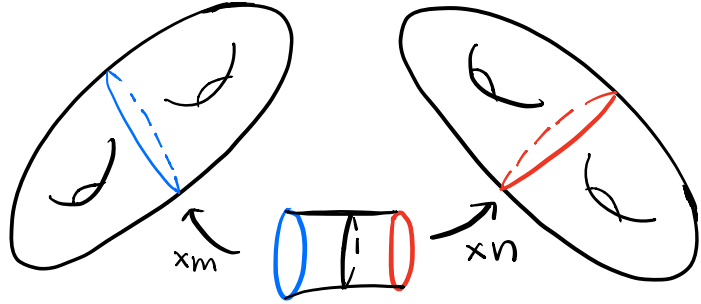}
\caption{The space $X_{mn}$ as in \cite{hst}. Each end of the cylinder is glued onto the corresponding curve with a positive degree map. }
\end{figure}

\begin{Theorem}\label{t:eqhst}
If $m$ or $n$ is divisible by $4$, then $\eqobdim(G_{mn}) = 4$.
\end{Theorem}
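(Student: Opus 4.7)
The plan is to establish both bounds. The upper bound $\eqobdim(G_{mn}) \le 4$ is immediate: $X_{mn}$ is a $2$-dimensional $K(G_{mn},1)$, so by Stallings' doubling $\actdim(G_{mn}) \le 4$, and the main inequality $\eqobdim \le \actdim$ of Section \ref{s:eqobdim} finishes this direction. So the substance of the theorem is the lower bound $\eqobdim(G_{mn}) \ge 4$.

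For this, let $K_5$ be the $\zz/4$-equivariant $1$-obstructor of Lemma \ref{l:mainex} (five points with $\zz/4$ fixing one and cyclically permuting four), and set $L := \Cone K_5$, which by the Cone Lemma is a $\zz/4$-equivariant $2$-obstructor. It suffices to produce an $H := \zz/4$-preserving proper expanding map $f : \Cone_\infty L \to \widetilde X_{mn}$. Assume without loss of generality that $m = 4k$, fix a lift $\ell \cong \rr$ of $\gamma_a$ in $\widetilde X_{mn}$, and note that exactly $m$ lifts of the cylinder attach to $\ell$ along their $c_a$-boundaries, cyclically permuted by $\gamma_a \in G_{mn}$. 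Consequently $\gamma_a^k$ acts as a $4$-cycle on a chosen orbit of four strips $T_0, T_1, T_2, T_3$, and each $T_i$ continues across to a lift $\ell_b^{(i)}$ in a copy $\widetilde S_b^{(i)}$ of $\widetilde S_b$.

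Fix $x_0 \in \ell$ and a geodesic ray $r_0$ starting at $x_0$, crossing $T_0$, and proceeding radially into $\widetilde S_b^{(0)}$ away from $\ell_b^{(0)}$. Define $f$ by sending the cone point of $\Cone_\infty L$ to $x_0$; the rays of the cone vertex $v$ and the fixed $K_5$-vertex $A$ to the two half-lines of $\ell$ based at $x_0$; and the ray of the $i$-th permuted $K_5$-vertex to $\gamma_a^{ik} \cdot r_0$, so that $f$ is strictly $\zz/4$-equivariant on the four permuted rays. Extend $f$ over the five $2$-sectors of $\Cone_\infty L$ by coning: the $[v,A]$-sector into a bounded neighborhood of $\ell$, and each $[v,i]$-sector into a bounded neighborhood of the ``geodesic triangle'' in $\widetilde X_{mn}$ spanning $\ell$, the strip $T_i$, and $\widetilde S_b^{(i)}$. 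This $f$ is expanding because the two half-lines of $\ell$ diverge, the $T_i$ share only $\ell$ and continue into distinct sheets $\widetilde S_b^{(i)}$, and the permuted rays leave every bounded neighborhood of $\ell$ linearly in $t$.

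For the $H$-preservation condition, set $g_h := \gamma_a^k$ for the generator $h \in \zz/4$: on the four permuted rays $f \circ h = g_h \circ f$ holds exactly, while on the two $\ell$-rays the maps differ only by a translation of bounded length along $\ell$. Uniform contractibility of $\widetilde X_{mn}$ then supplies a proper homotopy between $f \circ h$ and $g_h \circ f$ whose tracks remain within a bounded neighborhood of each sector's image, as required. The squared-element condition is automatic since $g_{h^2} = (\gamma_a^k)^2 \in S(G_{mn})$. The main obstacle is the simultaneous compatibility of the $2$-cell extensions with both the expanding condition (disjoint sectors stay uniformly far apart at infinity) and the bounded-tracks homotopy condition, but this is a bookkeeping exercise using the explicit local structure of $\widetilde X_{mn}$ near $\ell$ together with cocompactness of the $G_{mn}$-action. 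Combining the two bounds gives $\eqobdim(G_{mn}) = 4$.
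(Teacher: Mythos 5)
Your overall strategy is the same as the paper's: the upper bound is the easy direction, and the content is producing a $\zz/4$-preserving proper expanding map of $\Cone_\infty(\Cone(\text{5 points}))$ into $\widetilde X_{mn}$ built around the axis of $\gamma_a$ and the four cyclically permuted sheets attached along it. Your treatment of the four permuted rays and sectors, and of the $H$-preserving condition via $g_h = \gamma_a^k$, matches the paper in substance (the paper organizes the verification by embedding the $2$-obstructor into the visual boundary of the CAT(0) space and invoking Lemma \ref{l:boundary}, which packages the properness, expanding, and bounded-track checks that you are doing by hand).

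However, there is a genuine gap in your extension over the $[v,A]$-sector, and it is exactly the step your ``bookkeeping exercise'' remark glosses over. You send the cone-point ray and the $A$-ray to the two opposite half-lines of $\ell \cong \widetilde\gamma_a$ and then map the $2$-sector between them ``into a bounded neighborhood of $\ell$.'' Such a map cannot be proper: for each $t$, the image of $[v,A]\times\{t\}$ is a connected set joining a point at distance roughly $t$ along $\ell^+$ to a point at distance roughly $t$ along $\ell^-$ while staying within $N_R(\ell)$, so it must pass within distance $R$ of the basepoint $x_0$. Hence $f^{-1}(\overline{B(x_0,R)})$ meets $[v,A]\times\{t\}$ for every $t$ and is non-compact. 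The sector joining the two ends of $\ell$ needs a genuinely two-dimensional, properly embedded filling, and this is precisely where the hypothesis that $S_a$ is a closed hyperbolic surface enters: the paper takes $H$ to be one of the two half-planes of $\widetilde S_a \cong \mathbb{H}^2$ bounded by the geodesic $\widetilde\gamma_a$, sends the edge $[v,A]$ to the boundary arc $\partial_\infty H$ joining $\gamma_a^{+\infty}$ to $\gamma_a^{-\infty}$, and lets the coning sweep out $H$. Since $\langle a\rangle$ translates along $\widetilde\gamma_a$ and preserves $H$ setwise, this repair is compatible with your $H$-preserving verification; with that substitution your argument goes through.
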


\begin{proof}
First assume that $m = 4$ and $n >1$. 
Let $A_{mn}$ denote the quotient space of the cylinder $S^1 \times [0,1]$ under the identifications $(z,0) \cong (e^{2\pi i/m}z, 0)$ and $(z,1) \cong (e^{2\pi i/n}z, 1)$. The universal cover of $A_{mn}$ is $T_{mn} \times \rr$, where $T_{mn}$ is the biregular tree of valence $m$ and $n$. The fundamental group of $A_{mn}$ has presentation $$\pi_1(A_{mn}) = \langle a,b | a^m = b^n \rangle.$$

There is a natural totally geodesic embedding $A_{mn} \rightarrow X_{mn}$. We will identify $\pi_1(A_{mn})$ with its image in $G_{mn}$. 
Inside $EG_{mn} = \widetilde X_{mn}$, choose a copy of $T_{mn} \times \rr$ which the group $\langle a,b \rangle$ acts geometrically on. Let $\widetilde \gamma_a$ denote the axis of $a$ inside this copy of $T_{mn} \times \rr$. Then $\widetilde \gamma_a$ is $v \times \rr$, where $v$ is a valence $m$-vertex in $T_{mn}$. The element $a$ cyclically permutes the $m$-edges emanating from $v$ and translates $n$-units in the $\rr$-direction.  Furthermore, if $P_0$ is a geodesic ray based at $v \in T_{mn}$, then $\langle a \rangle$ cyclically permutes the collection of $m$-half planes $\{a^i(P_0 \times \rr)\}_{i = 0}^m$. The universal cover of $S_a$ is glued to this union of half planes along $\widetilde \gamma_a$; $\langle a \rangle$ acts on this universal cover by a hyperbolic translation. Let $H$ be one of the half planes in $\widetilde S_a$ that $\gamma_a$ bounds.

Let $K = \Cone((m+1) \text{ points})$. 
We define an embedding $f: K \rightarrow \partial_\infty EG_{mn}$. 
We send the cone point to $\gamma_a^{+\infty}$, one of the points to $\gamma_a^{-\infty}$, and the other $m$ points to the endpoints of the $a^iP_0$, see Figure \ref{f:hst1}. We extend this to the cone on $K$ by sending one interval to the boundary of $H$, and the other $m$ intervals to the boundaries of the $a_i(P_0 \times \rr^+)$. We let the group $\zz_m = \langle h \rangle$ act on $\Cone K$ in the usual way by fixing one interval and permuting the other $m$. 

The $\langle a \rangle$-action on $\partial_\infty EG_{mn}$ cyclically permutes the boundaries of $\{a^i(P_0 \times \rr)\}_{i = 0}^m$ and fixes setwise $\partial_\infty \widetilde S_a$. Since hyperbolic translations are isotopic to the identity, there is a homotopy from $a^i \circ f(\Cone K)$ to $f \circ h^i(K)$ which preserves the images of simplices of $K$. Since $\Cone K$ is an equivariant $2$-obstructor, by Lemma \ref{l:boundary},  we have that $\eqobdim(G_{mn}) = 4$. 
\begin{figure}\label{f:hst1}
\centering
\begin{tikzpicture}[scale = .7]

\begin{scope}
   \clip (0, -3) rectangle (3,3);
    \draw[fill = gray, opacity = .3] (0,0) circle(3);
      \draw (0,0) circle(3);
    \draw (0,3) -- (0, -3);
\end{scope}

\begin{scope}
   \clip (7, -3) rectangle (10,3);
    \draw (7,0) circle(3);
      \draw (7,0) circle(3);
   \end{scope}
   
  \draw(7,3) -- (7.75, 0); 
    \draw(7,3) -- (7.25, 0); 
 \draw(7,3) -- (6.75, 0); 
\draw(7,3) -- (6.25, 0);
   \draw[fill = black] (7.75,0) circle(.05);  
      \draw[fill = black] (7.25,0) circle(.05);  
         \draw[fill = black] (6.75,0) circle(.05);  
            \draw[fill = black] (6.25,0) circle(.05);  
               \draw[fill = black] (7,3) circle(.05);  
                  \draw[fill = black] (7,-3) circle(.05);  
   
   \node at (7, -4) {$K$};
   
 \draw[->](5.5,0) -- (3.5,0);   
   \node[above, scale = .8] at (4.5, 0) {$\partial_\infty \widetilde X_{mn}$};
 \draw (0,3) -- (-3,2);
  \draw (0,-3) -- (-3,-4);
   \draw (-3,2) -- (-3,-4);
   
    \draw[->, dashed] (0,0) -- (-2,1);
    \draw[->, dashed] (0,0) -- (-4,0);
       \draw[->] (0,0) -- (-3,-1);
      \draw (0,3) -- (0,-3);
    \draw (0,3) -- (-4,3);
  \draw (0,-3) -- (-4,-3);
   \draw (-4,3) -- (-4,-3);
   \draw (-4,-3) -- (-5,-3.25);
     \draw (-4,-3) -- (-5,-2.75);
 \draw (-3,-4) -- (-3.5,-4.5);
  \draw (-3,-4) -- (-4,-4);
   
     \draw (0,3) -- (-2,3.5);
  \draw (0,-3) -- (-2,-2.5);
   \draw[dashed] (-2,3.5) -- (-2,-2.5);
   
     \draw (-2.5,-2.25) -- (-2,-2.5);
      \draw (-2.75,-2.5) -- (-2,-2.5);
       \draw (0,-3) -- (-2,-2.5);
   
   \node at (1.5, 0) {$\widetilde S_a$};
      \node[scale = .8] at (-6, 0) {$\widetilde A_{mn} \cong T_{mn} \times \rr$};
     \node[above] at (0, 3) {$\gamma_a^{+\infty}$};
   \node[below] at (0, -3) {$\gamma_a^{-\infty}$};

\end{tikzpicture}
\caption{A piece of the universal cover of $X_{mn}$. The complex $T_{mn} \times \rr$ is glued along $\widetilde \gamma_a$ to the universal cover $\widetilde S_a \cong \mathbb{H}^2$. For $m,n \ge 2$ we map $K$ into $\partial_\infty X_{mn}$ by mapping one interval to the boundary of a half space in $\mathbb{H}^2$, and the other $m$ intervals to the boundary of hyperplanes in $\widetilde A_{mn}$.}
\end{figure}
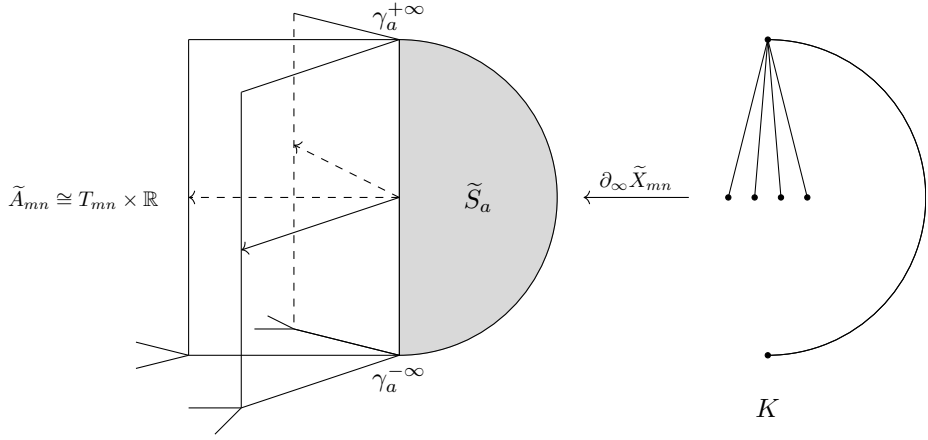

Now, suppose that $n = 1$. The proof in this case is similar; but with a slightly different choice of hyperplanes. In this case, for each edge $e_i$ adjacent to $v$ in $T_{mn}$, $e_i \times \rr$ intersects a lift of $\widetilde S_b$ in $\widetilde X_{mn}$ along a lift of $\gamma_b$. Label these lifts by $\widetilde S_b^i$ and $\gamma_b^i$ respectively. Choose a geodesic $\gamma_i'$ in $\widetilde S_b^i$ which is perpendicular to $\gamma_b^i$ and which intersects $e_i \times \rr$ in $\partial e_i \times 0$. Let $Q_i$ denote one of the quadrants bounded by $\gamma_b^i$ and $\gamma_i'$ that is mapped into itself by a positive translation along $\gamma_b^i$. 

Let $W$ be the union of the $Q_i$ along with $H_a$. The action of $\langle a \rangle$ fixes setwise the hyperplane $H_a$, and simultaneously cyclically permutes the $Q_i$ and acts on them by a hyperbolic translation along $\gamma_b^i$. Define $f: K \rightarrow \partial_\infty W \subset \partial_\infty X$ which again sends the cone point to $\gamma_a^{+\infty}$ and sends each interval to the boundary of $Q_i$. Note that $a^i\partial_\infty Q_i$ is strictly contained in $h^i\partial_\infty Q_{i+1}$. Furthermore, the two embeddings of $K$ into $\partial_\infty W$ are homotopic and the image of each simplex $\gs$ under the homotopy is contained in $f(\sigma)$. Therefore, by Lemma \ref{l:boundary}, $\eqobdim(G_{m1})= 4$. The same argument obviously works for $m$ a multiple of $4$, as we can choose a $\zz/4$-subgroup of $\zz/m$. 
\end{proof}

By the product formula for $\eqobdim$, we see that $$\eqobdim(\prod_k G_{mn}) = 3k+1$$ as long as $m$ or $n$ is divisible by $4$. We can also take the product of $\prod_k G_{mn}$ and any group where we know $\obdim_\zz(G)$. 

\subsection{Kapovich-Kleiner Examples}

We now briefly describe how the same methods work for the examples in \cite{kk}. These are constructed by starting with a hyperbolic surface $S$ with two boundary components $\gamma_a$ and $\gamma_b$, and gluing a cylinder connecting the boundary components via maps of degree $p$ and $q$ respectively. Let $X_{pq}$ denote this space and let $G_{pq} = \pi_1(X_{pq})$.  

Assume that $p = 4$, and choose a lift of $\gamma_a$ in the universal cover $\widetilde X_{pq}$ which $\langle a \rangle$ acts on. A neighborhood of $\widetilde \gamma_a$ in $\widetilde X_{pq}$ is homeomorphic to $K \times \rr$, where $K$ is the cone on $5$ points. Denote by $e_i$ the edges of $K$.  The action of $\langle a \rangle$ fixes say $e_0$ (which is contained in $\widetilde S_a$) and cyclically permutes $e_1, e_2, e_3$ and $e_4$. For $i > 0$, we say such an $e_i$ is a \emph{branch} in $\widetilde X_{pq}$. We want to extend this to an embedding of $\Cone_\infty(K)$ inside $\widetilde X_{pq}$.

To do this, we now construct an $\langle a \rangle$ invariant subcomplex $W$ of $\widetilde X_{pq}$ homeomorphic to $\Cone_\infty K$. We first $\langle a \rangle$-equivariantly choose a branch for each other lift of $\gamma_a$ and $\gamma_b$ in the lift of $S_a$ containing our chosen lift of $\gamma_a$. Next, glue on all lifts of $S_a$ that intersect these branches, as well as the lifts that intersect the four original branches. Continue in this way, choosing for each new lift of $\gamma_a$ and $\gamma_b$ single branches that intersect the new lifts of $S^a$ in lifts of $\gamma_a$ and $\gamma_b$. The resulting complex is homeomorphic to $\Cone_\infty(K) \times \rr$. Furthermore, the action of $\gamma_a$ cyclically permutes four of the halfplanes (while also shifting by a hyperbolic translation). The same argument as in the last example gives us that $\eqobdim(G_{pq}) = 4$.

\begin{bibdiv}
	\begin{biblist}

	\bib{ados}{article} {
		
		AUTHOR = {Avramidi, Grigori}, 
		author = {Davis, Michael W.}, 
		author = {Okun, Boris},
		author = {Schreve, Kevin}, 
		TITLE = {Action dimension of right-angled Artin groups}, 
		Journal = {Bull. of the London Math. Society},
		Volume = {48}, Year = {2016}, Number = {1}, Pages = {115--126},
		date = {2015},
		
		}

\bib{bf}{article} {
		%%MR1933585,
		AUTHOR = {Bestvina, Mladen}, author = {Feighn, Mark}, TITLE = {Proper actions of lattices on contractible manifolds}, JOURNAL = {Invent.
		Math.},
		
		VOLUME = {150}, YEAR = {2002}, NUMBER = {2}, PAGES = {237--256}, ISSN = {0020-9910},
		
		URL = {http://dx.doi.org.proxy.lib.ohio-state.edu/10.1007/s00222-002-0239-6}, }
		
		\bib{bkk}{article} {author = {Bestvina, Mladen}, author = {Kapovich, Michael}, author = {Kleiner, Bruce}, TITLE = {Van {K}ampen's embedding obstruction for discrete groups}, JOURNAL = {Invent.
		Math.},
		,
		VOLUME = {150}, YEAR = {2002}, NUMBER = {2}, PAGES = {219--235}, ISSN = {0020-9910},
		
		URL = {http://dx.doi.org.proxy.lib.ohio-state.edu/10.1007/s00222-002-0246-7}, }

\bib{bh}{book}{
  author={Bridson, Martin R.},
  author={Haefliger, Andr{\'e}},
  title={Metric spaces of non-positive curvature},
  series={Grundlehren der Mathematischen Wissenschaften [Fundamental
  Principles of Mathematical Sciences]},
  volume={319},
  publisher={Springer-Verlag, Berlin},
  date={1999},
  pages={xxii+643},
  isbn={3-540-64324-9},
  review={\MR{1744486}},
  doi={10.1007/978-3-662-12494-9},
}

\bib{dh}{article}{
  author={Davis, Michael W.},
  author={Huang, Jingyin},
  title= {Determining the action dimension of an Artin group by using its complex of abelian subgroups},
  Journal = {Bull. London Math. Soc}, 
  Year = {2017},
pages = {725-741},
volume = {49},
}

%\bib{dranishnikov}{article}{
 % author={Dranishnikov, Alexander},
 % title= {The virtual cohomological dimension of Coxeter groups},

  %journal={Proc. Amer. Math. Soc.,},
%  pages={1885-1891},
%  date = {1997},
  %volume = {125},
%}		

\bib{despotovic}{thesis}{ 
title ={ Action Dimension of Mapping Class Groups}, 
author ={ Despotovic, Zrinka}, 
year = {2006}, 
school ={ Department of Mathematics, University of Utah}, 
type = {phd}, } 

\bib{drutukapovich}{book}{
  author={Drutu, Cornelia},
  author={Kapovich, Misha},
title={Geometric Group Theory},
  series={Colloquium Publications},
  volume={63},
  publisher={American Mathematical Society},
  date={2018},
  pages={xxii+819},

}
 
  \bib{f}{article}{
	author = {Flapan, Erica},
	title = {Symmetries of Mobius Ladders},
	journal = {Mathematische Annalen},
	volume = {283}, YEAR = {1989},
	number = {2},
	pages = {271--283},

 }
 
   \bib{ffn}{article}{
	author = {Flapan, Erica},
	author = {Fletcher, Will},
	author = {Nikkuni, Ryo}
	title = {Reduced Wu and generalized Simon invariants for spatial graphs},
	journal = {Math. Proc. of the Camb. Phil. Soc},
	volume = {156}, YEAR = {2014},
	number = {3},
	pages = {521-544},

 }
 
  \bib{hs}{article}{
  author={Hambleton, Ian},
    author={Savin, Lucian},
journal = {Homology, Homotopy and Applications}
  title={Coarse geometry and P. A. Smith theory},
date = {2011},
pages = {73-102},
volume = {13}, 
number = {2}

}

 \bib{hst}{article}{
  author={Hruska, Chris},
    author={Stark, Emily},
     author={Tran, Hung Cong},
  title={Surface group amalgams that (don't) act on $3$-manifolds},
note = {arXiv:1705.01361},

}

 \bib{vk}{article}{
	author = {van Kampen, E.~R.},
	title = {Komplexe in euklidischen Räumen},
	journal = {Abh. Math. Sem. Univ. Hamburg},
	volume = {9}, YEAR = {1933},
	number = {1},
	pages = {72--78},

 }

 \bib{kk}{article}{
  author={Kapovich, Michael},
    author={Kleiner, Bruce},
  title={Coarse Alexander duality and duality groups},
  journal={J. Differential Geometry},
  volume={69},
  date={2005},
  pages={279-352},

}

 \bib{kirby}{article}{
  author={Kirby, Robion},
   title={Stable homeomorphisms and the Annulus Conjecture},
  journal={Annals of Mathematics},
  volume={89},
  number = {3},
  date={1969},
  pages={575-582},

}

 \bib{melikhov}{article}{

	author = {Melikhov, S. A.},
	title = {The van {K}ampen obstruction and its relatives},
	journal = {Proc. Steklov Inst. Math.},

	volume = {266}, YEAR = {2009},
	number = {1},
	pages = {142\ndash 176},
	issn = {0371-9685},

	url = {http://dx.doi.org.proxy.lib.ohio-state.edu/10.1134/S0081543809030092}, }

\bib{ks}{article}{
  author={Schreve, Kevin},
  title={Action dimension of lattices in Euclidean buildings},
  journal={Algebr. Geom. Topol.},
  volume={4},
  pages = {3257-3277}
  date={2018},

}

 \bib{stallings}{unpublished}{
 
	author = {Stallings, J.R.},
	title = {Embedding homotopy types into manifolds},
	date = {1965}, note={unpublished},
	url = {http://math.berkeley.edu/~stall/embkloz.pdf} }

 \bib{taniyama}{article}{
	author = {Taniyama, Kouki},
	title = { Homology classification of spatial embeddings of a graph},
	journal = {Topology and its Applications},
	date = {1995}, 
	pages = {205-288},
	volume = {65}
} 

	\end{biblist}
\end{bibdiv}

\end{document}